\numberwithin{equation}{section}
\newtheorem{theorem}{Theorem}[section]
\newtheorem{lemma}[theorem]{Lemma}
\newtheorem{corollary}[theorem]{Corollary}
\newtheorem{proposition}[theorem]{Proposition}
\theoremstyle{definition}
\theoremstyle{remark}
\newcommand{\sfrac}[2]{{#1}/{#2}}
\newcommand{\lnorm}{\left\Vert}
\newcommand{\rnorm}{\right\Vert}
\newcommand{\Biglnorm}{\Bigl\Vert}
\newcommand{\Bigrnorm}{\Bigr\Vert}
\newcommand{\abs}[1]{\left| #1 \right|}
\newcommand{\labs}{\left|}
\newcommand{\rabs}{\right|}
\newcommand{\biglabs}{\bigl|}
\newcommand{\bigrabs}{\bigr|}
\newcommand{\Biglabs}{\Bigl|}
\newcommand{\Bigrabs}{\Bigr|}
\newcommand{\bigglabs}{\biggl|}
\newcommand{\biggrabs}{\biggr|}
\newcommand{\lset}{\left\{}
\newcommand{\rset}{\right\}}
\newcommand{\biglset}{\bigl\{}
\newcommand{\bigrset}{\bigr\}}
\newcommand{\Biglset}{\Bigl\{}
\newcommand{\Bigrset}{\Bigr\}}
\newcommand{\bigglset}{\biggl\{}
\newcommand{\biggrset}{\biggr\}}
\newcommand{\lpar}{\left( }
\newcommand{\rpar}{\right) }
\newcommand{\biglpar}{\bigl(}
\newcommand{\bigrpar}{\bigr)}
\newcommand{\Biglpar}{\Bigl( }
\newcommand{\Bigrpar}{\Bigr) }
\newcommand{\bigglpar}{\biggl(}
\newcommand{\biggrpar}{\biggr)}
\newcommand{\norm}[1]{\left\Vert #1 \right\Vert}
\newcommand{\supp}{\operatorname{supp}}
\newcommand{\term}[1]{\mathrm{#1}}
\newcommand{\Lie}[1]{\mathfrak{#1}}
\newcommand{\Tau}{\mathrm{T}}
\newcommand{\R}{\mathbb{R}}
\newcommand{\N}{\mathbb{N}}
\newcommand{\C}{\mathbb{C}}
\newcommand{\Z}{\mathbb{Z}}
\newcommand{\Rplus}{\R_{+}}
\newcommand{\dcubes}{\mathscr{Q}}
\newcommand{\maxrect}{\mathscr{M}}
\newcommand{\drect}{\mathscr{R}}
\newcommand{\rect}{\mathscr{P}}
\newcommand{\bases}{\mathscr{B}}
\newcommand{\len}{\operatorname{\ell}}
\newcommand{\indext}{\tau}
\newcommand{\Indext}{\Tau}
\newcommand{\homodim}{\nu}
\newcommand{\wrt}{\,\mathrm{d}}
\newcommand{\wrtprotonprot}{\,\frac{\mathrm{d}\prot}{\prot}}
\newcommand{\expe}{\mathrm{e}}
\newcommand{\loc}{\mathrm{loc}}
\newcommand{\one}{^{[1]}}
\newcommand{\two}{^{[2]}}
\newcommand{\blen}{\operatorname{\boldsymbol{\ell}}}
\newcommand{\proG}{\mathbf{G}}
\newcommand{\proT}{\mathbf{T}}
\newcommand{\prog}{\mathbf{g}}
\newcommand{\proh}{\mathbf{h}}
\newcommand{\proo}{\mathbf{o}}
\newcommand{\pror}{\mathbf{r}}
\newcommand{\prot}{\mathbf{t}}
\newcommand{\proz}{\mathbf{z}}
\def\Xint#1{\mathchoice
{\XXint\displaystyle\textstyle{#1}}%
{\XXint\textstyle\scriptstyle{#1}}%
{\XXint\scriptstyle\scriptscriptstyle{#1}}%
{\XXint\scriptscriptstyle\scriptscriptstyle{#1}}%
\!\int}
\def\XXint#1#2#3{{\setbox0=\hbox{$#1{#2#3}{\int}$}
\vcenter{\hbox{$#2#3$}}\kern-.5\wd0}}
\def\dashint{\Xint-}
\newcommand{\fullnabla}{\slashed{\nabla}}
\newcommand{\psitt}{\psi_{\prot}}
\newcommand{\phitt}{\phi_{\prot}}
\newcommand{\pois}{p}
\newcommand{\poist}{\pois_{t}}
\newcommand{\poisone}{\pois_1}
\newcommand{\poisoneone}{\pois_1\one}
\newcommand{\poisonetwo}{\pois_1\two}
\newcommand{\ptt}{\pois_{\prot}}
\newcommand{\ptone}{\pois\one_{t_1}}
\newcommand{\pttwo}{\pois\two_{t_2}}
\newcommand{\pti}{\pois^{[i]}_{t_i}}
\newcommand{\conjpois}{q}
\newcommand{\qt}{{{\conjpois}}_{t}}
\newcommand{\qtt}{{{\conjpois}}_{\prot}}
\newcommand{\qone}{{{\conjpois}}_{1}}
\newcommand{\qti}{\conjpois^{[i]}_{t_i}}
\newcommand{\qtone}{\conjpois\one_{t_1}}
\newcommand{\qttwo}{\conjpois\two_{t_2}}
\newcommand{\heat}{h}
\newcommand{\heatt}{\heat_t}
\newcommand{\heatone}{\heat_1}
\newcommand{\fnspace}[1]{\mathsf{#1}}
\newcommand{\BMO}{\fnspace{BMO}}
\newcommand{\Hardy}{\fnspace{H}^1}
\newcommand{\LlogplusL}{\fnspace{L\,log\,L}}
\newcommand{\Leb}[1]{\fnspace{L}^{#1}}
\newcommand{\oper}[1]{\mathcal{#1}}
\newcommand{\Riesz}{\oper{R}}
\newcommand{\Lap}{\oper{L}}
\newcommand{\sqfn}[2]{\oper{S}_{#1,#2}}
\newcommand{\strmaxfn}{\oper{M}_{s}}
\newcommand{\maxfn}[2]{\oper{M}_{#1,#2}}
\let\oldphi\phi \let\oldvarphi\varphi
\renewcommand{\phi}{\oldvarphi}
\renewcommand{\varphi}{\oldphi}
\let\oldepsilon\epsilon \let\oldvarepsilon\varepsilon
\renewcommand{\epsilon}{\oldvarepsilon}
\renewcommand{\varepsilon}{\oldepsilon}
\begin{document}

\title[Endpoint estimate for product singular integrals]
{An endpoint estimate for product singular integral operators
on stratified Lie groups}

\author{Michael G. Cowling}
\address{School of Mathematics and Statistics, University of New South Wales, Sydney 2052, Australia}
\email{m.cowling@unsw.edu.au}

\author{Ming-Yi Lee}
\address{Department of Mathematics, National Central University, Chung-Li 320, Taiwan, Republic of China}
\email{mylee@math.ncu.edu.tw}

\author{Ji Li}
\address{Ji Li, Department of Mathematics, Macquarie University, NSW, 2109, Australia}
\email{ji.li@mq.edu.au}

\author{Jill Pipher}
\address{Department of Mathematics, Brown University, Providence RI 02912, USA}
\email{jill\_pipher@brown.edu}

\thanks{Cowling and Li are supported by ARC DP 220100285. 
Lee is supported by NSTC 112-2115-M-008-001-MY2.}

\subjclass[2010]{43A17, 42B20, 43A80}
\keywords{Strong maximal function, $\LlogplusL$, atomic decomposition}

\begin{abstract}
We establish hyperweak boundedness of area functions, square functions, maximal operators and Calderón--Zygmund operators on products of two stratified Lie groups.
\end{abstract}

\maketitle

\section{Introduction and statement of main results}

Since the 1980s, the development of multiparameter harmonic analysis has proceeded apace; recent contributions in the area include \cite{CF1,F1,F2,fl, HyM, Jo, lppw, NS, Ou, P}.
Much of the product space theory on $\R^m \times \R^n$, including the duality of $\Hardy$ with $\BMO$, characterisation of $\Hardy$ by square functions and atomic decompositions, and interpolation,  has been extended to more general products of spaces of homogeneous type.

In contrast to the classical theory of singular integrals, in multiparameter harmonic analysis product singular integrals are not of weak type $(1,1)$. 
For functions supported on the unit cube, the classical weak type $(1,1)$ estimate was replaced by a $\LlogplusL $ to $\Leb{1,\infty}$ estimate by R. Fefferman \cite{F1}. 
More precisely, let $\oper{T}$ be a product Calderón--Zygmund operator on $\Leb 2(\R^m\times\R^n)$ as in \cite{Jo} or \cite{F1}; then for every $\lambda\in \Rplus$,
\begin{align}\label{RFe}
|\{(x_1,x_2)\in \R^m \times \R^n\colon |\oper{T} f(x_1,x_2)|>\lambda\}|
\leq \frac{C}{\lambda} \|f\|_{\LlogplusL (Q_1\times Q_2)}
\end{align}
for all $f$ supported in the product $Q_1 \times Q_2$ of the unit cubes in $\R^m$ and $\R^n$, where
\begin{align*}
 \|f\|_{\LlogplusL (Q_1\times Q_2)} = \iint_{Q_1\times Q_2} |f(x_1,x_2)| (1+\log^+(|f(x_1,x_2)|)) \wrt x_2 \wrt x_1;
\end{align*}
here $\log^+t :=\log(\max\{1,t\})$.
The proof in \cite{F1} relies on the boundedness of the strong maximal function and the area function from $\LlogplusL$ to $\Leb{1,\infty}$, the local atomic decomposition of functions in $\LlogplusL$ produced using the $\LlogplusL $ to $\Leb{1,\infty}$ boundedness of the area function, and the boundedness of $\oper{T}$ on $\LlogplusL$ atoms.

A natural question arises: is \eqref{RFe} true globally for area functions, square functions, maximal operators and singular integral operators, even in more general product settings?

In this paper, we answer the above question positively on product spaces $\proG := G_1 \times G_2$, where $G_1$ and $G_2$ are stratified Lie groups.
To state our results, we need a little notation.
Details may be found later.

In this introduction, the auxiliary functions $\phi^{[i]}$ on $G_i$ satisfy standard decay and smoothness conditions and have integral $1$.
Likewise, the functions $\psi^{[i]}$ on $G_i$ satisfy standard decay and smoothness and have integral $0$.
We write $\zeta\one_{t_1}$ and $\zeta\two_{t_2}$ for normalised dilates of functions $\zeta\one$ on $G_1$ and $\zeta\two$ on $G_2$, and $\phitt$ and $\psitt$ for the product functions $\phi\one_{t_1} \otimes \phi\two_{t_2}$ and $\psi\one_{t_1} \otimes \psi\two_{t_2}$.

We define $\proT := \Rplus \times \Rplus$.
For $\prog :=(g_1,g_2)\in \proG$, $\prot \in \proT$ and $\eta \in [0,\infty)$, we denote by $P(\prog, \prot)$ the product of open balls $B_1(g_1, t_1)\times B_2(g_2, t_2)$ and by $\Gamma^{\eta }(\prog)$ the product cone $\Gamma_1^{\eta }(g_1)\times \Gamma_2^{\eta }(g_2)$, where
\[
\Gamma_i^{\eta }(g_i)
:=\{(h_i ,t_i )\in G_i \times \Rplus\colon\rho_i (g_i ,h_i ) \leq \eta t_i \}.
\]

For $\eta \in [0,\infty)$  and $f \in \Leb1(\proG)$, we define the maximal function:
\[
\maxfn{\phi}{\eta }(f)(\prog)
:=\sup \Bigl\{
\bigl| f \ast \phitt (\proh) \bigr| \colon
(\proh,\prot) \in \Gamma^\eta (\prog) \Bigr\}
\qquad\forall \prog\in\proG.
\]
This maximal function is called radial when $\eta = 0$ and nontangential when $\eta > 0$.

For $\eta \in \Rplus$  and $f \in \Leb1(\proG)$, we define the Lusin area function $\sqfn{\psi}{\eta }(f)$ by
\[
\sqfn{\psi}{\eta }(f)(\prog)
:=\lpar \iint_{\Gamma^{\eta }(\prog)}
\frac{\labs (f \ast \psitt(\proh) \rabs ^{2}}{\labs P(\prog,\eta \prot) \rabs  }
\wrt \proh \wrtprotonprot \rpar ^{1/2}
\qquad\forall \prog\in\proG.
\]
When $\eta = 0$  and $f \in \Leb1(\proG)$, we define the Littlewood--Paley square function $\sqfn{\psi}{0}(f)$ by
\[
\sqfn{\psi}{0}(f)(\prog)
:=\lpar \int_{\proT} \labs (f \ast \psitt(\prog) \rabs ^{2} \wrtprotonprot \rpar ^{1/2}
\qquad\forall \prog\in\proG.
\]
These are often given different symbols, but it is convenient to treat them together.

On a stratified Lie group $G$, we may take a basis $\{\oper{X}_1, \dots, \oper{X}_d\}$ for the space of left-invariant horizontal vector fields, and define the sublaplacian $\oper{L}$ to be $-\sum_{j-1}^{d} \oper{X}_j^2$.
The Riesz transformations are then the operators $\oper{X}_j \oper{L}^{-1/2}$.
The double Riesz transformations $\Riesz\one_{j_1} \otimes \Riesz\two_{j_2}$ on $\proG$ are defined in the obvious way when $1 \leq j_i \leq d_i$.

\begin{theorem}\label{thm main}
Let $\oper{T}$ be a maximal operator $\maxfn{\phi}{\eta }$ or a Littlewood--Paley operator $\sqfn{\psi}{\eta }$,  where $\eta \geq 0$, or a double Riesz transformation $\Riesz\one_{j_1} \otimes \Riesz\two_{j_2}$. 
Then 
\begin{align}\label{main LlogplusL}
\big| \big\{\prog \in  \proG\colon |\oper{T}f(\prog)|>\lambda\big\}\big| 
\lesssim   F_\Phi({f/\lambda})
\qquad\forall \lambda \in \Rplus,
\end{align}
for all $f \in \LlogplusL(\proG)$, 
where $\LlogplusL(\proG)$ is the Orlicz space associated to the functional $F_\Phi$, given by
\begin{align*}
F_{\Phi}(f) := \iint_{\proG} |f(\prog)| \log(\expe + |f(\prog)| ) \wrt \prog. 
\end{align*}
\end{theorem}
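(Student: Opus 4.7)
The plan is to follow R.~Fefferman's scheme from \cite{F1}, but carried out globally and in the stratified Lie group setting. First reduce to $f \geq 0$ by standard splittings. The common ingredient, used for all three classes of operators, is the endpoint strong maximal estimate
\[
|\{\prog \in \proG \colon \strmaxfn f(\prog) > \lambda\}| \lesssim F_\Phi(f/\lambda),
\]
for the product strong maximal function $\strmaxfn$ on $\proG$; this is the stratified analogue of Jessen--Marcinkiewicz--Zygmund, obtained by iterating the one-parameter weak-type $(1,1)$ Hardy--Littlewood estimate on each $G_i$ together with the classical Young-type $\LlogplusL$ iteration.

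The maximal operator case is then essentially immediate: the standard decay and smoothness hypotheses on $\phi\one$ and $\phi\two$ give a pointwise bound $\maxfn{\phi}{\eta}(f)(\prog) \lesssim \strmaxfn f(\prog)$, obtained by splitting each product ball into dyadic annuli and absorbing the aperture $\eta$ by an enlargement-of-apertures argument. For the Littlewood--Paley/area function $\sqfn{\psi}{\eta}$, fix $\lambda$, set $\Omega := \{\strmaxfn f > \lambda\}$, and decompose $f = g + b$, with $g$ equal to $f$ on $\Omega^{c}$ plus a suitable average of $f$ on each maximal dyadic subrectangle of $\Omega$. The good part is bounded, with $\|g\|_{\Leb{2}}^{2} \lesssim \lambda \|g\|_{\Leb{1}}$; the $\Leb{2}$-boundedness of $\sqfn{\psi}{\eta}$, which follows from Calderón-type reproducing formulas on each factor, gives the right contribution $\lambda^{-1}\|g\|_{\Leb{1}} \lesssim F_{\Phi}(f/\lambda)$. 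The bad part $b$, supported on $\Omega$, is decomposed in Chang--Fefferman style into pieces attached to the maximal dyadic subrectangles of $\Omega$; the off-diagonal decay and cancellation of the kernels $\psitt$ then control its contribution outside the ``shadow'' enlargement $\widehat{\Omega}$.

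The double Riesz case is the deepest step. One first exploits the area-function endpoint bound just obtained to produce an $\LlogplusL$ atomic decomposition $f = \sum_{k \in \Z}\sum_{R} \alpha_{k,R}\, a_{k,R}$, with atoms $a_{k,R}$ supported in product rectangles $R$ nested inside the level sets $\{\strmaxfn f > 2^{k}\}$, having cancellation in each variable separately, and satisfying $\sum_{k,R} |\alpha_{k,R}| \lesssim F_\Phi(f)$. One then shows that $\Riesz\one_{j_1}\otimes\Riesz\two_{j_2}(a_{k,R})$ has $\Leb{1}$-integrable tail outside a fixed enlargement of $R$, uniformly in atoms, by applying standard Calderón--Zygmund kernel estimates for each Riesz factor; summing over atoms and using the layer-cake identity $\sum_{k} 2^{k} |\{\strmaxfn f > 2^{k}\}| \lesssim F_{\Phi}(f)$ closes the estimate.

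The principal obstacle is the extension of the Chang--Fefferman atomic decomposition and the accompanying Journé covering lemma from $\R^{m} \times \R^{n}$ to $\proG$. One needs a dyadic structure on each stratified $G_{i}$ (for example, via Christ cubes), a Journé-type lemma showing that the shadow $\widehat{\Omega}$ of an open set satisfies $|\widehat{\Omega}| \lesssim |\Omega|$ with constants independent of scale, and a global atomic decomposition whose atom bounds are summable across all scales. Once this geometric groundwork is in place, the endpoint estimates follow from standard off-diagonal kernel estimates and bookkeeping; the bulk of the technical effort is concentrated in building this atomic decomposition and verifying the covering lemma in the noncommutative product setting.
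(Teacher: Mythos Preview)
Your overall architecture (strong maximal $\Rightarrow$ maximal; area function $\Rightarrow$ atomic decomposition $\Rightarrow$ singular integrals via Journ\'e) matches the paper's, and the maximal function step is fine. But there is a genuine gap at the area function step, and it propagates.

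The single-level decomposition $f=g+b$ that you describe, with $\Omega=\{\strmaxfn f>\lambda\}$ and $g$ obtained by averaging $f$ over the maximal dyadic subrectangles of $\Omega$, does not work in the product setting. Maximal rectangles overlap badly, so there is no well-defined ``average of $f$ on each maximal rectangle'' that makes $g$ bounded while giving $b$ the biparameter cancellation you need; this is exactly the obstruction that forced Chang--Fefferman to replace product Calder\'on--Zygmund decompositions by atomic decompositions in the first place. Consequently your control of $\sqfn{\psi}{\eta}(b)$ off $\widehat\Omega$ is unsupported. The paper avoids this entirely: it does \emph{not} prove the area function bound by a Calder\'on--Zygmund argument, but imports the good-$\lambda$ inequality
\[
\bigl|\{\sqfn{\fullnabla \pois}{1}(f)>\lambda\}\bigr|
\lesssim \bigl|\{\maxfn{\pois}{\eta}(f)>\lambda\}\bigr|
+\frac{1}{\lambda^{2}}\int_{\{\maxfn{\pois}{\eta}(f)\le\lambda\}}\maxfn{\pois}{\eta}(f)^{2}
\]
from \cite{CFLY} and integrates it against the already-established hyperweak bound for $\maxfn{\pois}{\eta}$. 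This is the missing idea.

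A second issue is the atomic decomposition you sketch: the atoms should be indexed by the level sets $\{\sqfn{\conjpois}{1}(f)>2^{k}\}$ of the area function, not $\{\strmaxfn f>2^{k}\}$, because the atoms are manufactured by cutting the Calder\'on reproducing formula over tents, and it is the area function that controls those pieces in $\Leb2$. Correspondingly, the coefficient control is not the $\ell^{1}$ bound $\sum|\alpha_{k,R}|\lesssim F_{\Phi}(f)$ you claim; one gets $\|a_{k}\|_{\Leb2}^{2}\lesssim 2^{2k}F_{\Phi}(2^{-k}f)$ and $\sum_{R}\|a_{k,R}\|_{\Leb2}^{2}\lesssim 2^{2k}F_{\Phi}(2^{-k}f)$ at each level, and the Riesz/area estimates are closed by combining this with Journ\'e's lemma and a geometric dilation of the supporting rectangles by a $k$-dependent factor. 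Your final paragraph correctly identifies where the work lies, but the proposal as written does not supply the mechanism that replaces the failed product Calder\'on--Zygmund step.
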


It is not possible to replace $F_\Phi(f/\lambda)$ by $\norm{f}_{\LlogplusL(\proG)}/\lambda$, where $\norm{\cdot}_{\LlogplusL(\proG)}$ is the Luxemburg norm associated to  $F_\Phi$.
We explain Orlicz spaces and Luxemburg norms later.
We say that the operator $\oper{T}$ is hyperweakly bounded when an estimate of the form \eqref{main LlogplusL} holds.

It is easy to iterate estimates for one-parameter maximal, area or singular integral operators to prove a local version of this result, where the support of the function $f$ is restricted to lie in a compact set.
However, iteration does not seem to be able to deal with the global case, and this is the main difficulty that we need to confront in this paper.

Our presentation has the following structure.
In Section 2 we review some background on stratified Lie groups, product spaces and Orlicz spaces.  

In Section 3 we show that the strong maximal operator $\strmaxfn$ is hyperweakly bounded, using a covering lemma, Theorem \ref{thm Covering}, that goes back to \cite{CF}.
We then prove Theorem \ref{thm main} for the maximal operator $\maxfn{\phi}{\eta }$, by using group properties to dominate the maximal operator $\maxfn{\phi}{\eta }$ by the strong maximal operator. 

In Section 4, we construct the atomic decomposition.
We use the gradient $\fullnabla \pois$ of the Poisson kernel $\pois$ as our auxiliary function, and then have a key result of \cite[(12) and following]{CFLY}, namely, the good-$\lambda$ inequality:
\begin{align}\label{GoodLambda}
\bigl|\bigl\{\prog \in \proG\colon \sqfn{\fullnabla \pois}{1}(f)(\prog) > \lambda \bigr\} \bigr|
&\lesssim \bigl| L_{\eta }^c \bigr|
+
\frac{1}{\lambda^2} \int_{L_{1}(\lambda )} \maxfn{\pois}{\eta }^2 (f)(\prog) \wrt \prog,
\end{align}
when $\eta $ is sufficiently large; here $L_{\eta }(\lambda) :=\lset \prog\in {\proG}\colon \maxfn{\pois}{\eta }(f)(\prog) \leq \lambda  \rset$.
This implies that the area operator $\sqfn{\fullnabla \pois}{1}$ is hyperweakly bounded. 
By using this boundedness and the Calderón reproducing formula, we can decompose global $\LlogplusL$ functions into atoms.

In Section 5, we apply our atomic decomposition and a version of Journé's covering lemma for spaces of homogeneous type established in \cite{HLL}.
We prove Theorem \ref{thm main} for \emph{general} operators $\sqfn{\psi}{\eta }$  and the double Riesz transformations $\Riesz\one_{j_1} \otimes \Riesz\two_{j_2}$. 
The same argument holds for general product Calderón--Zygmund operators, as in Journé \cite{Jo}.

Most of the arguments rely only the theory of spaces of homogeneous type. 
However, we need the setting of a stratified Lie group in two places. 
First, it gives us the good $\lambda$ inequality \eqref{GoodLambda}. 
Second, it gives us the Calderón reproducing formula for $\Leb 2(G)$ functions, which is needed for the atomic decomposition. 

``Constants'' are positive real numbers, depending only on the geometry of $\proG$ unless otherwise indicated; we write $A \lesssim B$ when there exists a constant $C$ such that $A \leq C B$.
We write $\chi_{E} $ for the indicator function of a set $E$, and $o$ denotes the identity of a group.

\section{Preliminaries}

\subsection{Stratified nilpotent Lie groups}\label{ssec:strat-groups}
Let $G$ be a (real and finite dimensional) stratified nilpotent Lie group of step $s$ with Lie algebra $\Lie{g}$.
This means that we may write $\Lie{g}$ as a vector space direct sum $\bigoplus_{j = 1}^{s} \Lie{v}_{j} $, where $[\Lie{v}_1, \Lie{v}_{j}] = \Lie{v}_{j + 1}$ when $1\leq j \leq s$; here $\Lie{v}_{s+1} := \{0\}$.
Let $\homodim$ denote the homogeneous dimension of $G$; that is, $\sum_{j=1}^{s} j \dim \Lie{v}_{j}$.
There is a one-parameter family of automorphic dilations $\delta_t$ on $\Lie{g}$, given by
\begin{align*}
\delta_t (\oper{X}_1 + \oper{X}_2+ \dots + \oper{X}_{s}) := t\oper{X}_1 + t^2\oper{X}_2 + \dots + t^{s} \oper{X}_{s};
\end{align*}
here each $\oper{X}_j \in \Lie{v}_{j}$ and $t \in \Rplus$.
The exponential mapping $\exp\colon \Lie{g} \to G$ is a diffeomorphism, and we identify $\Lie{g}$ and $G$.
The dilations extend to automorphic dilations of $G$, also denoted by $\delta_t$, by conjugation with $\exp$.
The Haar measure on $G$, which is bi-invariant, is the Lebesgue measure on $\Lie{g}$ lifted to $G$ using $\exp$.

By \cite{HS}, the group $G$ may be equipped with a smooth subadditive homogeneous norm $\rho$, a continuous function from $G$ to $[0,\infty)$ that is smooth on $G\setminus \{o\}$
and satisfies
\begin{enumerate}
\item $\rho(g^{-1}) =\rho(g)$;
\item $\rho(g^{-1}h) \leq \rho(g) + \rho(h)$;
\item $\rho({ \delta_t(g)}) =t\rho(g)$ for all $g\in G$ and all $t \in \Rplus$;
\item $\rho(g) =0$ if and only if $g=o$.
\end{enumerate}
Abusing notation, we set $\rho(g, g') := \rho(g^{-1} g')$
for all $g, g' \in G$; this defines a metric on $G$.
We write $B(g, r)$ for the open ball with centre $g$ and radius $r$ with respect to $\rho$:
\[
B(g, r) = g B(o,r) = g \{ h \in G \colon \rho(h) < 1 \}.
\]
The metric space $(G,\rho)$ is \emph{geometrically doubling}; that is, there exists $A \in \N$ such that every metric ball $B(x,2r)$ may be covered by at most $A$ balls of radius $r$.

We remind the reader that a stratified Lie group is a space of homogenous type in the sense of Coifman and Weiss \cite{CW1, CW2}, and analysis on stratified Lie groups uses much from the theory of such spaces.
In particular, we frequently deal with \emph{molecules}, that is, functions $\zeta$ that satisfy \emph{standard decay and smoothness conditions}, by which we mean that there is a parameter $\epsilon \in (0,1]$, which we fix once and for all, such that
\begin{equation}\label{eq:molecule}
\begin{gathered}
\labs \zeta(g) \rabs  \lesssim \frac{1}{(1+ \rho(g))^{\homodim+\epsilon}}, \\
\labs \zeta(g)-\zeta(g') \rabs
\lesssim \frac{\rho( g' g^{-1} )^\epsilon}{(1+ \rho(g)+ \rho(g'))^{\homodim+2\epsilon} }
\end{gathered}
\end{equation}
for all $g, g' \in G$.
We often impose an additional \emph{cancellation condition}, namely
\begin{gather}\label{cancel psi}
\int_G \zeta(g) \wrt g =0 .
\end{gather}

The \emph{normalised dilate} $f_t$ of a function $f$ on $G$ by $t \in \Rplus$ is given by $f_{t} := t^{-\homodim}f\circ \delta_{1/t}$, and the \emph{convolution} $f\ast f'$ of suitable functions $f$ and $f'$ on $G$ is defined by
\begin{align*}
f \ast f'(g)
:=\int_Gf(h)f'(h^{-1}g)\wrt h
=\int_Gf(gh^{-1})f'(h)\wrt h.
\end{align*}

Take left-invariant vector fields $\oper{X}_1$, \dots, $\oper{X}_{n}$ on $G$ that form a basis of $\Lie{v}_1$, and define the \emph{sub-Laplacian} $\Lap := -\sum_{j=1 }^{n} (\oper{X}_{j})^2 $.
Observe that each $\oper{X}_{j}$ is homogeneous of degree $1$ and $\Lap$ is homogeneous of degree $2$, in the sense that
\begin{align*}
&\oper{X}_{j} \lpar  f \circ \delta_{t} \rpar
= t \lpar  \oper{X}_{j} f \rpar  \circ \delta_{t}, \\
&\Lap \lpar  f \circ \delta_{t} \rpar
= t^2  \lpar  \Lap f \rpar  \circ \delta_{t}
\end{align*}
for all $t \in \Rplus$ and all $f \in \fnspace{C}^{2}(G)$.

Associated to the sub-Laplacian, Folland \cite{F2} defined the Riesz potential operators $\Lap^{-\alpha}$, where $\alpha \in \Rplus$; these are convolution operators with homogeneous kernels.
The \emph{Riesz trans\-formation} $\Riesz_{j} := \oper{X}_{j} \Lap^{-1/2}$ is a singular integral operator, and is bounded on $\Leb{p}(G)$ when $1 < p < \infty$ as well as from the Folland--Stein Hardy space $\Hardy(G)$ to $\Leb1(G)$.

The Hardy--Littlewood maximal operator $\oper{M}$ on $G$ is defined using the metric balls:
\[
\oper{M}f(g)
:= \sup\lset   \dashint_{B(g',r)} \labs f(g'') \rabs  \wrt g'' \colon g \in B(g',r) \rset .
\]
where the ``average integral'' $\dashint$ is defined by
\[
\dashint_B f(\prog) \wrt\prog := \frac{1}{\labs B \rabs } \int_B f(\prog) \wrt\prog.
\]
For future use, we note that the layer cake formula implies that, if $\mu$ is a radial decreasing function on $G$ (that is, $\mu(g)$ depends only on $\rho(g)$ and decreases as $\rho(g)$ increases), then
\begin{equation}\label{eq:decay-max}
\labs f \rabs  \ast \mu_\epsilon (g)
\leq  \norm{ \mu}_{\Leb1(G)} \oper{M}f(g)
\qquad\forall g \in G.
\end{equation}

\subsection{Functional calculus for the sub-Laplacian}\label{ssec:funct-calc}
The sub-Laplacian $\Lap$ has a spectral resolution:
\[
\Lap (f)=\int_{\Rplus}\lambda \wrt \oper{E}_{\Lap }(\lambda) f
\qquad\forall f\in \Leb2(G),
\]
where $\oper{E}_{\Lap}(\lambda)$ is a projection-valued measure on $[0,\infty)$, the spectrum of $\Lap$.
For a bounded Borel function $m\colon[0,\infty)\to \C$, we define the operator $F(\Lap)$ spectrally:
\begin{equation*}
m(\Lap)f := \int_{\Rplus} m(\lambda)\wrt \oper{E}_{\Lap}(\lambda) f
\qquad\forall f\in \Leb2(G).
\end{equation*}
This operator is a convolution with a Schwartz distribution on $G$.

\subsection{The heat and Poisson kernels}
Let $\heatt$ and $\poist$, where $t \in \Rplus$, be the heat and Poisson kernels associated to the sub-Laplacian operator $\Lap$, that is, the convolution kernels of the operators $e^{t\Lap}$ and $e^{t \sqrt{\Lap}}$ on $G$.
We write $\qt$ for $t\partial_t \poist$.
We warn the reader that $\poist$ and $\qt$ are the normalised dilates of $\poisone$ and $\qone$ by the factor $t$, but $\heatt$ is the normalised dilate of $\heatone$ by a factor of $t^{1/2}$.
Let $\nabla$ denote the subgradient on $G$ and $\fullnabla$ denote the gradient $(\nabla , \partial_{t})$ on $G\times\Rplus$.

\begin{lemma}\label{lem:heat-pois-estim}
The kernels $\heatt$ and $\poist$ are $\Rplus$-valued.
Further, $\heatt$ and $\poist$ have integral $1$, while $\qt$ has integral $0$
for all $t \in \Rplus$.
Finally, there exists a constant $c$ such that
\begin{align*}
\heatt(g)
&\lesssim t^{-\homodim/2}\exp\lpar -\sfrac{\rho^{2}(g)}{ct}\rpar,
\\
\labs \fullnabla\heatt(g) \rabs
&\lesssim t^{-(\homodim+1)/2}\exp\lpar -\sfrac{\rho^{2}(g)}{ct}\rpar,
\\
\poist(g)
&\eqsim \frac{ t }{ (t^2 + \rho(g)^2)^{(\homodim + 1)/2}},
\\
\labs \fullnabla \poist(g) \rabs
&\lesssim \frac{ t }{ (t^2 + \rho(g)^2)^{(\homodim + 2)/2}}
\end{align*}
for all $g\in G$ and $t \in \Rplus$.
\end{lemma}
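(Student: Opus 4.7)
The plan is to deal with the three assertions in turn: positivity and total mass, the Gaussian estimates for the heat kernel and its gradient, and the two-sided Poisson estimates together with the gradient bound. Homogeneity under the dilations $\delta_t$ and the subordination formula will do most of the work, so the heart of the matter is really the on-diagonal statements at $t = 1$.

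First I would establish positivity and the mass formulae. Since $\Lap$ is a nonnegative self-adjoint operator with $\Lap(1) = 0$, the semigroup $\{e^{-t\Lap}\}_{t \in \Rplus}$ is Markovian, so $\heatt \geq 0$ and $\int_G \heatt = 1$. Positivity and total mass for $\poist$ then follow from the subordination identity
\[
\poist = \int_{\Rplus} \frac{t}{2\sqrt{\pi}\,s^{3/2}} \expe^{-t^2/(4s)} \heat_{s}\wrt s,
\]
because the scalar weight is a probability density in $s$. Differentiating the identity $\int_G \poist = 1$ in $t$ gives $\int_G \qt = 0$.

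Next I would prove the heat kernel bounds. Using homogeneity, $\heatt(g) = t^{-\homodim/2}\heatone(\delta_{1/\sqrt{t}} g)$ and $\oper{X}_j \heatt(g) = t^{-(\homodim+1)/2}(\oper{X}_j\heatone)(\delta_{1/\sqrt{t}} g)$, so both estimates reduce to the case $t=1$. The Gaussian upper bound $\heatone(g) \lesssim \expe^{-\rho(g)^2/c}$ is a classical theorem on stratified Lie groups (Folland--Stein, Varopoulos--Saloff-Coste--Coulhon), as is the analogous bound for $\oper{X}_j \heatone$. The time derivative $\partial_t \heatt$ is handled by noting $\partial_t \heatt = -\Lap \heatt$ and applying the same Gaussian estimate to the derivative kernel, together with the scaling $\partial_t \heatt(g) = t^{-1}\cdot t^{-\homodim/2}(\partial_s \heat_s|_{s=1})(\delta_{1/\sqrt{t}}g)$.

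Finally I would derive the Poisson estimates from the heat estimates via subordination; this is the main technical step. Substituting the Gaussian bound for $\heat_s$ into the subordination integral and changing variables $s = t^2 u$, one obtains
\[
\poist(g) \lesssim \int_{\Rplus} \frac{t}{s^{(\homodim+3)/2}}\expe^{-t^2/(4s)}\expe^{-\rho(g)^2/(cs)}\wrt s
\eqsim \frac{t}{(t^2+\rho(g)^2)^{(\homodim+1)/2}},
\]
where the last comparison is the standard beta-type integral. The matching lower bound comes from restricting the $s$-integral to $s \eqsim t^2 + \rho(g)^2$ and using the Gaussian lower bound for $\heat_s$ on that range, which is also a classical fact. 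The gradient bound is obtained by differentiating under the subordination integral: for $\oper{X}_j$ one uses the heat-kernel gradient estimate, picking up an extra factor $s^{-1/2}$ that improves the decay by one power of $(t^2+\rho(g)^2)^{1/2}$; for $\partial_t$ one differentiates the scalar weight $t \expe^{-t^2/(4s)}/s^{3/2}$, noting that the two resulting terms each lead, after the same change of variables, to the same improved decay. Combining these yields the stated bound on $\labs \fullnabla \poist \rabs$. The only delicate point is justifying the interchange of derivative and integral, which is routine from the uniform-in-compact-$t$ decay of the integrand.
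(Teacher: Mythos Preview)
Your approach is essentially the same as the paper's: the paper cites \cite[Theorem IV.4.2]{VSC92} for the Gaussian heat kernel bounds and then invokes the subordination formula
\[
e^{-t\sqrt{\Lap}}
=\frac{1}{2\sqrt{\pi}}\int_{\Rplus}
\frac{te^{-{t^2}/{4v}}}{\sqrt{v}}e^{-v \Lap}\frac{\wrt v}{v},
\]
leaving the remaining details to the reader; your proposal simply fills in those details (positivity via Markovianity, scaling to reduce to $t=1$, and the beta-type integral after subordination). One small caution: when you differentiate the scalar weight in $t$, the term coming from $\partial_t(t) = 1$ produces $\int s^{-(\homodim+3)/2}\expe^{-A/s}\wrt s \eqsim (t^2+\rho^2)^{-(\homodim+1)/2}$, which is not dominated by $t(t^2+\rho^2)^{-(\homodim+2)/2}$ when $\rho(g)\gg t$; so the claim that ``the two resulting terms each lead to the same improved decay'' is not quite right for $\partial_t\poist$, though the bound you do get is ample for the molecular estimates used later in the paper.
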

\begin{proof}
For the heat kernel estimates, see \cite[Theorem IV.4.2]{VSC92}.
Note that the first estimate has a version with the opposite inequality, with a different constant $c$.

The estimates for $\poist$ and $\qt$ follow from the subordination formula
\begin{equation*}
e^{-t\sqrt{\Lap}}
=\frac{1}{2\sqrt{\pi}}\int_{\Rplus}
\frac{te^{-{t^2}/{4v}}}{\sqrt{v}}e^{-v \Lap}\frac{\wrt v}{v}.
\end{equation*}
We leave the details to the reader.
\end{proof}

This lemma shows that the heat kernel $\heatone$ and the Poisson kernel $\poisone$ and their derivatives satisfy the standard decay and smoothness conditions \eqref{eq:molecule}; their derivatives also satisfy the cancellation condition \eqref{cancel psi}.

%
%

\subsection{Systems of pseudodyadic cubes}
\label{sec:pseudodyadiccubes}

We use the Hytönen--Kairema \cite{HK} families of ``dyadic cubes" in geometrically doubling metric spaces.
We state a version of \cite[Theorem 2.2]{HK} that is simpler, in that we work on metric spaces rather than pseudo\-metric spaces.
The Hytönen--Kairema construction builds on seminal work of Christ \cite{Chr} and of Sawyer and Wheeden \cite{SW}.

\begin{theorem}[\cite{HK}]\label{thm:Hyt-Kai}
\label{thm:pseudodyadiccubes}
Let $c$, $C$ and $\kappa$ be constants such that $0 < c \leq C < \infty$ and $12 C\kappa \leq c$, and let $(G,\rho)$ be a metric stratified group. 
Then, for all $k \in \Z$, there exist families $\dcubes_k(G)$ of \emph{pseudo\-dyadic cubes} $Q$ with \emph{centres} $z(Q)$, such that:
\begin{enumerate}
\item $G$ is the disjoint union of all $Q \in \dcubes_k(G)$, for each $k\in\Z$;
\item $B(z(Q),c\kappa^k/3)\subseteq Q \subseteq B(z(Q),2C\kappa^k)$ for all $Q \in \dcubes_k(G)$;
\item if $Q \in \dcubes_k(G)$ and $Q' \in \dcubes_{k'}(G)$ where $k\leq k'$, then either $Q \cap Q'=\emptyset$ or $Q \subseteq Q'$; in the second case, $B(z(Q), 2C\kappa^k) \subseteq B(z(Q'),2C\kappa^{k'})$;
\end{enumerate}
\end{theorem}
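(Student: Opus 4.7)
The plan is to follow the Hytönen--Kairema refinement of Christ's construction. The input at each level $k\in\Z$ is a maximal $\kappa^k$-separated set of \emph{centres} $\{z^k_\alpha:\alpha\in\mathcal{A}_k\}$ in $G$, meaning that $\rho(z^k_\alpha,z^k_\beta)\geq \kappa^k$ for $\alpha\neq\beta$ and every $g\in G$ lies within distance $\kappa^k$ of some $z^k_\alpha$. Existence and at-most-countability of such a net follow from Zorn's lemma and the geometric doubling property of $(G,\rho)$.

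Next I would install a tree on the disjoint union of all centres. For each level-$(k+1)$ centre I would assign a \emph{unique} parent at level $k$, chosen among those centres lying within distance $\kappa^k$ of it (existence is guaranteed by maximality, uniqueness by a fixed well-ordering of $\mathcal{A}_k$). This parent map extends to a forest, and I write $\Pi^k(\alpha)\subseteq \mathcal{A}_{k+j}$ for the $j$th generation of descendants of $\alpha\in\mathcal{A}_k$. I would then form provisional Voronoi cells $\widetilde Q^k_\alpha$ (closest-centre cells, with ties broken by the same ordering) and define
\[
Q^k_\alpha := \bigcup_{j\geq 0} \bigcup_{\beta\in \Pi^k(\alpha)} \widetilde Q^{k+j}_\beta ,
\]
regrouping the Voronoi cells by ancestry so that nesting across scales is built in by construction.

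Property (1) is then immediate from the fact that the $\widetilde Q^{\ell}_\gamma$ tile $G$ at each level $\ell$ and each centre has a unique ancestral lineage. For the outer bound in (2), a telescoping sum along a chain of parents gives $\rho(z^{k+j}_\beta, z^k_\alpha)\leq \sum_{i=0}^{j-1}\kappa^{k+i}\leq \kappa^k/(1-\kappa)$, and combining with the Voronoi radius at the finest level yields $Q^k_\alpha \subseteq B(z^k_\alpha, 2C\kappa^k)$, once $\kappa\leq 1/12$ and $C$ are tuned appropriately. For the inner bound $B(z^k_\alpha, c\kappa^k/3)\subseteq Q^k_\alpha$, I would argue that a point $g$ within $c\kappa^k/3$ of $z^k_\alpha$ cannot be closer to any competing level-$k$ centre (by separation) and cannot drift into a sibling cube along the descent, because every competing lineage stays at distance at least $\kappa^k - \kappa^{k+1}/(1-\kappa)$ from $z^k_\alpha$; the hypothesis $12C\kappa\leq c$ is exactly what makes this exceed $c\kappa^k/3$. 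Property (3) is then read off the construction: the ancestry encodes $Q\subseteq Q'$ when $k\leq k'$ and the cubes intersect, and the ball inclusion in (3) follows from (2) at both scales and the triangle inequality.

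The main obstacle is the calibration of constants. The factor $12$ in the condition $12C\kappa\leq c$ is not cosmetic: it must simultaneously absorb (i) the Voronoi-diameter-to-separation ratio, (ii) the geometric-series drift $\sum_j \kappa^j$ accumulated while descending through generations, and (iii) the buffer needed to prevent boundary points of $Q^k_\alpha$ from migrating into a sibling. Once the constants are locked in, the delicate combinatorial step is choosing a single well-ordering (equivalently, a tie-breaking rule) on $\bigcup_k \mathcal{A}_k$ that is compatible with the parent map, so that the Voronoi partitions at all scales are mutually consistent; after that, the verification of (1)--(3) reduces to the bookkeeping sketched above.
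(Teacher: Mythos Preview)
The paper does not prove this theorem at all: it is quoted verbatim as a result of Hyt\"onen and Kairema \cite{HK}, and the text immediately moves on to use it. So there is no proof in the paper to compare against.

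Your sketch is a faithful outline of the Hyt\"onen--Kairema construction itself (maximal $\kappa^k$-nets, a parent map chosen by well-ordering, Voronoi cells regrouped by ancestry, then verification of the three properties via a geometric-series drift bound). That is the correct route, and the calibration issues you flag---the telescoping $\sum_j \kappa^{k+j}$, the Voronoi radius, and the buffer preventing migration to a sibling---are exactly where the constant $12$ in $12C\kappa\le c$ is consumed. One caution: in your inner-ball argument you should also check that the level-$k$ centre $z^k_\alpha$ is itself retained as a centre at every finer level (or at least that $z^k_\alpha$ lies in the Voronoi cell of its own descendant lineage at each level $k+j$), since otherwise the inclusion $B(z^k_\alpha, c\kappa^k/3)\subseteq Q^k_\alpha$ can fail near the centre; Hyt\"onen and Kairema handle this by arranging the nets so that $\{z^k_\alpha\}\subseteq \{z^{k+1}_\beta\}$ for all $k$. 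With that proviso your outline is sound, but since the paper simply cites the result, the appropriate thing in this context is to do the same rather than reproduce the construction.
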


We write $\dcubes(G)$ for the union of all $\dcubes_k(G)$, and call this a system of pseudo\-dyadic cubes.
Given a cube $Q \in \dcubes_k(G)$, we denote the quantity $\kappa^k$ by $\len(Q)$, by analogy with the side-length of a Euclidean cube.

A finite collection $\{\dcubes^{\indext}\colon {\indext}=1,2,\dots ,\Indext\}$ of systems of pseudodyadic
cubes is called a \emph{collection of adjacent systems of pseudo\-dyadic cubes with parameters $C'$, $c$, $C$ and $\kappa$}, if it has the following properties: individually, each $\dcubes^{\indext}$ is a
system of pseudo\-dyadic cubes with parameters $c$, $C$ and $\kappa$ as in Theorem \ref{thm:Hyt-Kai}; collectively, for each ball $B(x,r)\subseteq G$ such that $\kappa^{k+3}<r\leq\kappa^{k+2}$, where $k\in\Z$, there exist ${\indext} \in \{1, 2, \dots, \Indext\}$ and $Q\in\dcubes^{\indext}_k$ with centre $z(Q)$ such that $d(x,{}^{\indext}x_\alpha^k) < 2\kappa^{k}$ 
\begin{equation}\label{eq:ball;included}
    B(x,r)\subseteq Q\subseteq B(x,C'r).
\end{equation}

The following construction is due to \cite{HK}.

\begin{theorem}\label{thm:existence2}
Suppose that $(G,\rho)$ is a metric stratified group.
Then there exists a finite collection $\{\dcubes^{\indext}\colon {\indext} = 1,2,\dots ,\Indext\}$ of adjacent systems of pseudo\-dyadic cubes with parameters $C'$, $c$, $C$ and $\kappa$, where $\kappa := 1/100$, $c := 12^{-1}$, $C := 4$ and $C' := 8 \times 10^{6}$. 
For each ${\indext}\in\{1,2,\dots,\Indext\}$, the centres $z(Q)$ of the cubes $Q \in\dcubes^{\indext}_k$ have the  properties that
\[
d(z(Q), z(Q')) \geq \kappa^k /4
\qquad\text{when $Q\neq Q'$}
\]
and
\[
\min \{ d(x, z(Q)) \colon Q \in \dcubes^{\indext}_k \} < 2\kappa^k
\qquad\forall x\in G.
\]
\end{theorem}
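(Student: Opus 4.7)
The plan is to invoke the Hytönen--Kairema construction directly. Since a stratified Lie group equipped with the homogeneous norm metric $\rho$ is geometrically doubling (as noted in Section~\ref{ssec:strat-groups}), this is a special case of \cite[Theorem~4.1]{HK}; the content of the proof is to verify that the numerical parameters $\kappa = 1/100$, $c = 1/12$, $C = 4$ satisfy the constraint $12 C\kappa \leq c$ of Theorem~\ref{thm:pseudodyadiccubes} and that the resulting $C'$ can be taken to be $8 \times 10^{6}$.

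First I would fix, for each $k\in \Z$ and each ${\indext}\in\{1,\dots,\Indext\}$, a maximal $\kappa^k/4$-separated subset $\{{}^{\indext}z_\alpha^k\}_\alpha$ of $G$ chosen by a Zorn/greedy argument. Maximality gives the two conclusions displayed at the end of the theorem: the separation $\rho({}^{\indext}z_\alpha^k, {}^{\indext}z_\beta^k) \geq \kappa^k/4$ for $\alpha\neq\beta$, and the covering $\min_\alpha \rho(x, {}^{\indext}z_\alpha^k) < 2\kappa^k$ for every $x\in G$. Next I would assemble these centres into a tree, by choosing for each ${}^{\indext}z_\alpha^k$ a parent ${}^{\indext}z_{\hat\alpha}^{k+1}$ within distance $2\kappa^{k+1}$ (possible by the covering property at the next scale), and then define the pseudodyadic cubes ${}^{\indext}Q_\alpha^k$ following Christ's recipe: take the union of the descendant small balls $B({}^{\indext}z_\beta^j, c\kappa^j/3)$ for $j \leq k$ in the subtree rooted at ${}^{\indext}z_\alpha^k$, then adjust boundaries to obtain a disjoint partition. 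With the choices $c = 1/12$, $C = 4$, $\kappa = 1/100$ the hypothesis $12 C\kappa \leq c$ reads $12\cdot 4 \cdot (1/100) \leq 1/12$, which is valid, so properties (1), (2) and (3) of Theorem~\ref{thm:pseudodyadiccubes} follow from the Hytönen--Kairema construction applied to each index ${\indext}$ independently.

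For the adjacency property (\ref{eq:ball;included}), the issue is that a single system $\dcubes^{\indext}$ may have a ball $B(x,r)$ straddling cube boundaries. The Hytönen--Kairema remedy is to choose the initial centre sets $\{{}^{\indext}z_\alpha^{k_0}\}$ at a base scale $k_0$ as $\Indext$ different ``shifts'' forming a fine covering of a ball of radius $2\kappa^{k_0}$: more precisely, take a maximal $\kappa^{k_0+3}/4$-separated set within a ball and use each of its points to initialize a separate system. Geometric doubling of $(G,\rho)$ bounds the required number of shifts by a constant $\Indext$ depending only on the doubling constant $A$. For any ball $B(x,r)$ with $\kappa^{k+3} < r \leq \kappa^{k+2}$, one of the shifts places a centre ${}^{\indext}z_\alpha^k$ with $\rho(x,{}^{\indext}z_\alpha^k) < 2\kappa^k$, and then $Q := {}^{\indext}Q_\alpha^k$ satisfies $B(x,r)\subseteq Q$ by property (2) together with $r + 2\kappa^k \leq 3\kappa^{k+2}\cdot 100 + 2\kappa^k$, while $Q\subseteq B(x, C'r)$ with $C'= 8\times 10^6$ once one compounds the upper radius bound $2C\kappa^k = 8\kappa^k$ from property (2) with the lower bound $r > \kappa^{k+3}$, giving $2C\kappa^k + \rho(x,z(Q)) \leq 10\kappa^k = 10\kappa^{-3}\cdot\kappa^{k+3} < 10^7 r$.

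The main obstacle is purely bookkeeping: tracking the constants through the Christ construction to confirm that the given $\kappa$, $c$, $C$, $C'$ work in concert, and confirming that the doubling constant of $(G,\rho)$ produces only finitely many shifts $\Indext$. No step requires anything beyond the theory of geometrically doubling metric spaces, so the stratified Lie group structure is invoked only through the existence of the homogeneous norm $\rho$.
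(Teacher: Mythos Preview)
Your approach matches the paper's: the paper gives no proof at all but simply attributes the construction to Hyt\"onen--Kairema \cite{HK}, and you likewise invoke \cite{HK} while adding a sketch of the underlying Christ-style construction. So at the level of strategy there is nothing to compare.

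That said, your parameter check contains an arithmetic error. You write that $12C\kappa \leq c$ ``reads $12\cdot 4 \cdot (1/100) \leq 1/12$, which is valid''; but $12\cdot 4/100 = 0.48$ while $1/12 \approx 0.083$, so the inequality fails. This is in fact an inconsistency already present in the paper's stated constants for Theorems~\ref{thm:pseudodyadiccubes} and~\ref{thm:existence2}, not a flaw in your method; the Hyt\"onen--Kairema construction goes through with some admissible choice of parameters, and the precise numerical values are immaterial for the rest of the paper. Similarly, your bookkeeping for the outer inclusion gives $Q \subseteq B(x, 10^{7} r)$ rather than the claimed $C' = 8\times 10^{6}$, and your argument for the inner inclusion $B(x,r)\subseteq Q$ via property~(2) alone does not close (the inner ball $B(z(Q), c\kappa^{k}/3)$ has radius roughly $\kappa^{k}/36$, far smaller than $2\kappa^{k} + r$); the actual Hyt\"onen--Kairema argument for this step uses the tree structure, not just the ball sandwich. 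None of this affects the substance: the theorem is a direct citation, and your sketch correctly identifies the ingredients.
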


From \cite[Remark 2.8]{KLPW}, the number $\Indext$ of adjacent systems of pseudo\-dyadic cubes in Theorem \ref{thm:existence2} may be taken to be at most $A^6 \kappa^{-\log_2(A)}$, where $A$ is the geometric doubling constant of $G$.
The constants $c$, $C$, $C'$ and $\kappa$ do not depend on the choice of the metric stratified group $(G,\rho)$.

\subsection{Products of stratified groups}

We equip the product of two stratified groups $G_1$ and $G_2$ with a product structure.
We carry forward the notation from Section \ref{ssec:strat-groups}, adding a subscript $i$ or superscript $[i]$ to clarify that we are dealing with $G_i$; the parameter $i$ is always $1$ or $2$.
To shorten the formulae, we often use bold face type to indicate a product object: thus we write $\proG$, $\proT$, $\prog$, $\pror$ and $\prot$ in place of $G_1 \times G_2$, $\Rplus \times \Rplus$, $(g_1,g_2)$, $(r_1,r_2)$ and $(t_1,t_2)$.
For example, $B_i (g_i, r_i)$ denotes the open ball in $G_i$ with centre $g_i$ and radius $r_i$, with respect to the homogeneous norm $\rho_i $, and we write $P(\prog,\pror)$ for the product $B_1(g_1, r_1) \times B_2(g_2, r_2)$.
We write $\pi_i$ for the projection of $\proG$ onto $G_i$.

Products of balls are basic geometric objects and we write $\rect(\proG)$ for the family of all such products.
In addition we deal with \emph{rectangles}, by which we mean products of pseudo\-dyadic cubes.
We write $\drect(\proG)$ for the family of all rectangles, and for adjacent pseudo\-dyadic systems as constructed in Theorem \ref{thm:existence2}, we let $\drect^{{\indext}_1,{\indext}_2}(\proG)$ be the family of rectangles $\{Q_1\times Q_2\colon Q_i\in \dcubes^{{\indext}_i},\ {\indext}_i=1,2,\dots ,\Indext_i\}$.
We let $\blen\colon \drect(\proG) \to \proT$ be the function such that $\len_i(Q_1 \times Q_2) = \len(Q_i)$, the ``side-length'' of $Q_i$.

The element of Haar measure on $\proG$ is denoted $\wrt\prog$, but is often written $\wrt g_1\wrt g_2$ for calculations.
The convolution $f\ast f'$ of suitable functions $f$ and $f'$ on $\proG$ is defined by
\begin{align*}
(f\ast f')(\prog)
:=\int_{\proG}f(\proh)f'(\proh^{-1}\prog) \wrt \proh.
\end{align*}

The strong maximal operator $\strmaxfn$ is defined by
\begin{align*}
\strmaxfn(f)(\prog)
:=\sup \lset \dashint_{P}\labs f(\proh) \rabs \wrt \proh \colon P \in \rect(\proG), P \ni \prog \rset.
\end{align*}
for all $f \in \Leb1_{\loc}(\proG)$.
It is evident that $\strmaxfn$ is dominated by the composition of the Hardy--Littlewood maximal operators in the factors:
\[
\strmaxfn{f}
\leq \oper{M}_1 \oper{M}_2 (f)
\qquad\text{and}\qquad
\strmaxfn{f}
\leq \oper{M}_2 \oper{M}_1(f).
\]
When $1 < p \leq \infty$, the operators $\oper{M}_1$ and $ \oper{M}_2$ in the factors are $\Leb{p}$-bounded, so the iterated maximal operators and hence the strong maximal operator are also $\Leb{p}$-bounded.

Given an open subset $U$ of $\proG$ with finite measure $\labs U \rabs $, we define the enlargement $U^{*}$ of $U$ using the strong maximal operator $\strmaxfn$:
\begin{align*}
U^{*}
:= \Bigl\{ \prog \in \proG \colon \strmaxfn \chi_{U}(\prog) > \alpha \Bigr\} ,
\end{align*}
for some $\alpha \in (0,1)$ that varies from instance to instance.
We write $\maxrect(U)$ for the family of maximal rectangles contained in $U$.

If $\phi\one$ on $G_1$ and $\phi\two$ on $G_2$ both satisfy the decay and smoothness conditions \eqref{eq:molecule} and have integral $1$, then
\begin{equation}\label{eq:Poisson-leq-strong}
\labs  f \ast \phitt(\prog)  \rabs  \lesssim \strmaxfn(f)(\prog)
\qquad\forall \prog \in \proG \quad\forall f \in \Leb1(\proG),
\end{equation}
much as argued to prove \eqref{eq:decay-max}, but with ``biradial'' in place of ``radial''.

\subsection{Journé's covering lemma}

We recall a covering lemma for product spaces. 
Journé  \cite{Jo} first established this result on $\R\times\R$. 
The fourth author  \cite{P} extended it to higher dimensional Euclidean spaces and an arbitrary number of factors $\R^{n_1}\times \R^{n_2}\times \cdots\times \R^{n_m}$. 
This was extended to products of spaces of homogeneous type in \cite[Lemma 2.1]{HLL}.

Let $U$ be an open subset of $\proG$ of finite measure and $\maxrect_i(U)$ denote the family of rectangles $R$ in $U$ which are maximal (in terms of inclusion) in the $i$th ``direction''. 
Recall that $\maxrect(U)$ is the set of all maximal subrectangles of $U$.  
Given $R=Q_1\times Q_2\in \maxrect_1(U)$, let $\tilde{Q}_2$ be the biggest pseudo\-dyadic cube containing $Q_2$ such that
\[
 \big|\biglpar Q_1\times \tilde{Q}_2\bigrpar \cap U\big|>\frac{1}{2}|Q_1\times \tilde{Q}_2|.
\]
Similarly, given $R=Q_1\times Q_2\in \maxrect_2(U)$, let $\tilde{Q}_1$ be the biggest pseudo\-dyadic cube
containing $Q_1$ such that
\[
 \big|\biglpar \tilde{Q}_1\times Q_2\bigrpar \cap U\big|>\frac{1}{2}|\tilde{Q}_1\times Q_2|.
\]
We then set 
\[
 \gamma_1(R) := \frac{\len(\tilde{Q}_1)}{\len(Q_1)}
\qquad\text{and}\qquad 
\gamma_2(R) := \frac{\len(\tilde{Q}_2)}{\len(Q_2)}.
\]

Here is our Journé-type covering lemma on $\proG$.

\begin{lemma}\label{theorem-cover lemma}
Suppose that $U$ is an open subset in $\proG$ of finite measure and $\delta\in \Rplus$. 
Then 
\begin{align*}
\sum_{R\in \maxrect_1(U)}\abs{R} \gamma_1(R)^{-\delta}
\lesssim |U|
\qquad\text{and}\qquad
\sum_{R\in \maxrect_2(U)}\abs{R} \gamma_2(R)^{-\delta}
\lesssim |U|.
\end{align*}
\end{lemma}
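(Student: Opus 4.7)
My plan is to adapt Journé's covering argument \cite{Jo} in the form developed for products of spaces of homogeneous type in \cite[Lemma 2.1]{HLL}; the two inequalities are symmetric, so I describe only the first. The essential inputs are the $\Leb{2}$-boundedness of the strong maximal operator $\strmaxfn$ recalled in the preliminaries and the adjacent pseudo-dyadic systems of Theorem \ref{thm:existence2}.

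The first step is to introduce the enlargement
\[
U^{*} := \bigl\{\prog \in \proG \colon \strmaxfn \chi_{U}(\prog) > \tfrac{1}{2}\bigr\},
\]
so that $|U^{*}| \lesssim |U|$ by Chebyshev and the $\Leb{2}$ bound. For each $R = Q_1 \times Q_2 \in \maxrect_1(U)$, the defining property of $\tilde Q_2$ forces the average of $\chi_U$ over $R^{*} := Q_1 \times \tilde Q_2$ to exceed $1/2$, so $R^{*} \subseteq U^{*}$. Equally important is the complementary maximality of $\tilde Q_2$: writing $\hat Q_2$ for the pseudo-dyadic parent of $\tilde Q_2$, one has $|(Q_1 \times \hat Q_2) \setminus U| \geq \tfrac{1}{2} |Q_1 \times \hat Q_2|$, which provides the empty room from which the $\gamma_2(R)^{-\delta}$ decay will be extracted.

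I would then slice $\maxrect_1(U) = \bigsqcup_{k \geq 0} S_k$ by the pseudo-dyadic level separation of $\tilde Q_2$ above $Q_2$, so that $\gamma_2(R) \eqsim \kappa^{-k}$ on $S_k$, and, via Theorem \ref{thm:existence2}, pass to a single adjacent pseudo-dyadic system at the cost of a constant factor in $\Indext_1 \Indext_2$. The lemma then reduces to a slicewise bound of the form $\sum_{R \in S_k} |R| \lesssim A^k |U|$ with $A < \kappa^{-\delta}$, which, combined with the weight $\gamma_2(R)^{-\delta} \eqsim \kappa^{k\delta}$, gives the desired estimate after summing a convergent geometric series in $k$.

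This slicewise estimate is the technical heart of Journé's argument and is the main obstacle. To obtain it I would run a one-parameter reduction: for each $g_1 \in G_1$, the second factors $Q_2$ of rectangles $R \in \maxrect_1(U)$ with $g_1 \in Q_1$ look like Whitney cubes of the open slice $U(g_1) := \{g_2 \in G_2 \colon (g_1, g_2) \in U\}$ in $G_2$. The pair of facts $R^{*} \subseteq U^{*}$ and $|(Q_1 \times \hat Q_2) \setminus U| \geq \tfrac{1}{2} |Q_1 \times \hat Q_2|$ then allows a one-parameter iterated-maximal-function argument on $G_2$ (which is a space of homogeneous type, so the Hardy--Littlewood maximal operator is $\Leb{p}$-bounded for all $p > 1$) to control the number of $S_k$-rectangles covering a given point with the required geometric decay; integration in $g_1$ completes the proof. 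The stratified-group setting contributes no new difficulty beyond invoking Theorem \ref{thm:existence2}, as the rest of the argument is pure space-of-homogeneous-type analysis.
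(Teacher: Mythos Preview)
Your approach coincides with the paper's: both defer to \cite[Lemma~2.1]{HLL}, which is exactly the Journé-type covering lemma for products of spaces of homogeneous type. The paper's proof is in fact a one-line citation of that result, so your sketch goes considerably further than what the paper itself supplies.

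That said, a couple of points in your outline are loose. First, the inclusion $R^{*}\subseteq U^{*}$ does not follow immediately from the average condition because $\strmaxfn$ is defined via products of balls, not pseudodyadic rectangles; you need the comparison of Lemma~\ref{lem:compare-R-P} (and hence a smaller threshold than $1/2$ in the definition of $U^{*}$). Second, the description of the second factors $Q_2$ as ``Whitney cubes of the slice $U(g_1)$'' is not accurate: maximality of $R$ in the first direction says nothing about the position of $Q_2$ within the slice. The actual slicewise bound in Journé's argument is the \emph{uniform} estimate $\sum_{R\in S_k}|R|\lesssim |U|$ (not merely $A^k|U|$ with $A<\kappa^{-\delta}$, which for arbitrary $\delta$ forces $A\le 1$ anyway), and it is obtained by grouping the $R\in S_k$ according to their enlarged rectangle $R^{*}=Q_1\times\tilde Q_2$, using that for fixed $\tilde Q_2$ the first factors $Q_1$ are pairwise disjoint by maximality, and then invoking the stopping condition on the parent $\hat Q_2$ together with the $\Leb2$ bound for $\strmaxfn$. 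Your ``one-parameter iterated-maximal-function'' phrasing points in the right direction but does not quite capture this mechanism.
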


\begin{proof}
Apply the statement in \cite[Lemma 2.1]{HLL} to $\proG$.
\end{proof}

\subsection{The Orlicz space $\LlogplusL(\proG)$}
We recall the definition of an Orlicz space on $\proG$. 
A Young function is a continuous, convex, increasing bijective function $\Phi\colon[0,\infty)\to [0,\infty)$.
To a Young function $\Phi$, we associate the (usually nonlinear) functional $F_\Phi$, given by
\[
F_{\Phi}(f) = \int_{\proG} \Phi(|f(\prog)|) \wrt \prog
\qquad\forall f \in \Leb1_{\loc}(\proG).
\]
The convexity of $\Phi$ implies that $\{ f \in \Leb1_{\loc}(\proG) \colon F_\Phi(f) \leq 1 \}$ is a closed convex symmetric set, and so the Luxemburg norm, given by
\[
 \|f\|_{\Leb{\Phi}(\proG)} := \inf\lset \lambda\in \Rplus \colon F_{\Phi}(f/\lambda) \leq 1 \rset,
\]
is indeed a norm, and $\Leb\Phi(\proG)$, the set of functions $f$ for which $\|f\|_{\Leb{\Phi}(\proG)}$ is finite, is a Banach space. 
The sets $\{ f \in \Leb1_{\loc}(\proG) \colon F_\Phi(f) \leq 1 \}$ and $\{ f \in \Leb1_{\loc}(\proG) \colon \norm{f}_{\Leb{\Phi}(\proG)} \leq 1 \}$ coincide.

Suppose that $\Phi$ and $\Psi$ are Young functions such that
\begin{equation}\label{eq:conjugate}
st \leq \Phi(s)+\Psi(t)
\qquad\forall s,t \in \Rplus.
\end{equation}
Then
\begin{equation}\label{eq:pre-Holder}
\labs \int_\proG f(\prog) h(\prog) \wrt \prog \rabs
\leq \int_\proG \Phi(f(\prog)) \wrt \prog + \int_\proG \Psi( h(\prog)) \wrt \prog ,
\end{equation}
and it follows that the corresponding Luxemburg norms are related by the \emph{generalized Hölder inequalities}, namely,
\begin{equation}\label{eq:general-Holder}
\int_\proG f(\prog) h(\prog) \wrt \prog
\leq 2\|f\|_{\Leb\Phi(\proG)} \|h\|_{\Leb{\bar\Phi}(\proG)}.
\end{equation}

We are particularly interested in a special pair of Young functions.
Henceforth,
\begin{equation}\label{def:Phi-and-Psi}
 \Phi(s):=s[\log(\expe + s)]\qquad\text{and}\qquad \Psi(t) := \exp(t)-1 .
\end{equation}
By maximising in $s$, it is straightforward to show that $1 + st - s \log(\expe+s) \leq \expe^t$, so these functions satisfy \eqref{eq:conjugate}, whence \eqref{eq:pre-Holder} and \eqref{eq:general-Holder} hold.
For this choice of $\Phi$ and $\Psi$, the corresponding spaces are denoted by $\LlogplusL(\proG)$ and $\expe^{\Leb{}}(\proG)$.
Clearly $\LlogplusL(\proG) \subseteq \Leb{1}(\proG)$.
Further, $\Phi(\lambda t) \leq \Phi(\lambda) \Phi(t)$, whence
\begin{equation}\label{eq:nearly-linear}
F_\Phi(\lambda f) \leq \Phi(\lambda) F_\Phi(f)
\qquad\forall \lambda\in \Rplus \quad\forall f \in \LlogplusL(\proG).
\end{equation}
This implies that $f \in \LlogplusL(\proG)$ if and only if $F_\Phi(f)$ is finite.
Further, the hyperweak boundedness estimate \eqref{main LlogplusL} that we work with is equivalent to an estimate of the form
\begin{align*}
\big| \big\{\prog \in  \proG\colon |\oper{T}f(\prog)|>\lambda\big\}\big| 
\lesssim   \frac{\log(\expe + 1/\lambda)}{\lambda} \norm{f}_{\LlogplusL(\proG)}
\qquad\forall \lambda \in \Rplus,
\end{align*}
for all $f \in \LlogplusL(\proG)$ of norm $1$, that is, $\oper{T}f$ decays a little slower at infinity that would occur if we had a weak type estimate.
Finally, since $\Psi$ is a Young function,
\begin{equation}\label{eq:Jensen}
\Psi(t/\lambda) \leq \Psi(t)/\lambda
\qquad\forall \lambda\in [1,\infty) .
\end{equation}

We need a density result. 

\begin{proposition}\label{prop density}
$\Leb 2 \cap \LlogplusL (\proG)$ is dense in $\LlogplusL (\proG)$.
\end{proposition}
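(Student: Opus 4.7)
The plan is to approximate an arbitrary $f \in \LlogplusL(\proG)$ by bilateral truncations. Fix an exhaustion of $\proG$ by nested products of balls, say $P_n := B_1(o_1,n) \times B_2(o_2,n)$, and set
\begin{equation*}
f_n(\prog) := f(\prog) \, \chi_{\{\labs f\rabs \leq n\} \cap P_n}(\prog).
\end{equation*}
Each $f_n$ is bounded by $n$ and supported in a set of finite measure, hence lies in $\Leb p(\proG)$ for every $p \in [1,\infty]$; in particular $f_n \in \Leb 2(\proG)$. The pointwise majorisation $\labs f_n\rabs \leq \labs f\rabs$ combined with $F_\Phi(f) < \infty$ places each $f_n$ in $\LlogplusL(\proG)$.

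Next, I would show that $\|f - f_n\|_{\LlogplusL(\proG)} \to 0$, passing first through $F_\Phi$-convergence. Since $f \in \Leb 1(\proG)$, the function $\labs f\rabs $ is finite almost everywhere, and since $P_n \nearrow \proG$ one has $f - f_n \to 0$ pointwise almost everywhere. The bound $\Phi(\labs f - f_n\rabs ) \leq \Phi(\labs f\rabs ) \in \Leb 1(\proG)$ supplies an integrable dominant, so the dominated convergence theorem gives $F_\Phi(f - f_n) \to 0$.

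To upgrade $F_\Phi$-convergence to convergence in the Luxemburg norm, I would fix any $\epsilon \in (0,1]$ and apply \eqref{eq:nearly-linear} with $\lambda = 1/\epsilon$ to the function $f - f_n$, obtaining
\begin{equation*}
F_\Phi\bigl((f - f_n)/\epsilon\bigr) \leq \Phi(1/\epsilon) \, F_\Phi(f - f_n) \longrightarrow 0 \qquad\text{as $n \to \infty$.}
\end{equation*}
In particular, $F_\Phi((f-f_n)/\epsilon) \leq 1$ for all sufficiently large $n$, whence $\|f - f_n\|_{\LlogplusL(\proG)} \leq \epsilon$ by the definition of the Luxemburg norm. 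Since $\epsilon$ is arbitrary, this completes the proof. No substantive obstacle arises: this is the standard Orlicz-space truncation argument, and every ingredient---the submultiplicative estimate \eqref{eq:nearly-linear}, the dominated convergence theorem, and the Luxemburg norm characterisation---is already available in the preceding subsection.
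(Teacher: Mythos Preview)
Your proof is correct and follows essentially the same approach as the paper: height truncation followed by dominated convergence to obtain $F_\Phi$-convergence, then an upgrade to Luxemburg-norm convergence. The only cosmetic differences are that the paper truncates in height alone (using $|f_N|^2 \leq N|f|\log(\expe+|f|)$ to land in $\Leb2$), whereas you also truncate spatially, and the paper carries out the $F_\Phi((f-f_N)/\lambda)$ estimate by hand rather than invoking \eqref{eq:nearly-linear}.
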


\begin{proof}
For $f\in \LlogplusL (\proG)$, we define, for all $N \in \N$,
\[
  f_N := \chi_{\{\prog\in \proG\colon |f(\prog)| \leq N\}} f .
\]
Then $f_N\in \LlogplusL (\proG)$, $|f_N| \leq |f|$, and $f_N \to f$ almost everywhere as $n \to \infty$. 
Next,
\begin{align*}
\int_{\proG} |f_N(\prog)|^2 \wrt \prog 
\leq \int_{\proG} N |f(\prog)| \log(\expe + |f(\prog)|) \wrt \prog
= N F_{\Phi}(f),
\end{align*}
which shows that $f_N \in \Leb 2(\proG)$ for all $N \in \Z^+$.
Finally, if $\lambda < 1$, then
\begin{align*}
F_{\Phi}((f-f_N)/\lambda)
&= \frac{1}{\lambda} \int_{\proG} |f(\prog)-f_N(\prog)| \log(\expe + |(f(\prog)-f_N(\prog))/\lambda)| \wrt \prog\\
&\leq \frac{1}{\lambda} \int_{\{\prog\in \proG\colon |f(\prog)|>N\}} |f(\prog)|
\lpar \log(\expe + |f(\prog)|) + \log(1/\lambda) \rpar \wrt \prog \\
&\to 0
\end{align*}
as $N \to \infty$.
Consequently, $\lnorm{f-f_N}\rnorm_{\LlogplusL(\proG)}$ tends to zero as $N$ tend to infinity.
\end{proof}

\section{The strong maximal function and Proof of Theorem \ref{thm main}}

Before we tackle the main topic of this section, we state and prove a covering lemma.
We begin with a geometric observation.

\begin{lemma}\label{lem:compare-R-P}
There is a geometric constant $C(\proG)$ with the property that
\[
R \subseteq \{ g\in \proG \colon \strmaxfn(\chi_U)(\prog) > C(\proG) \alpha \}
\]
for all measurable subsets $U$ of $\proG$, all $\alpha \in (0,1)$ and all rectangles $R \in \rect$ such that
\[
|R \cap U| \geq \alpha \abs{R}.
\]
\end{lemma}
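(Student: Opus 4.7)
The plan is simply to exhibit, for each $\prog \in R$, an explicit product of balls $P$ that contains both $\prog$ and $R$ and whose measure is comparable to that of $R$. Then $P$ is an admissible rectangle in the definition of $\strmaxfn$, and the ratio $|P \cap U|/|P|$ yields the desired lower bound on $\strmaxfn(\chi_U)(\prog)$.

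Concretely, write $R = B_1(g'_1, r_1) \times B_2(g'_2, r_2)$ and fix $\prog = (g_1, g_2) \in R$, so that $\rho_i(g_i, g'_i) < r_i$ for $i = 1, 2$. By the triangle inequality for $\rho_i$, every $h_i \in B_i(g'_i, r_i)$ satisfies $\rho_i(g_i, h_i) < 2r_i$, so $B_i(g'_i, r_i) \subseteq B_i(g_i, 2r_i)$. Setting $P := B_1(g_1, 2r_1) \times B_2(g_2, 2r_2)$, we obtain a product of balls in $\rect(\proG)$ that contains $\prog$ and satisfies $R \subseteq P$.

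Next, since the Haar measure on $G_i$ is homogeneous of degree $\homodim_i$ with respect to the dilations $\delta_t$, we have $|B_i(g_i, 2r_i)| = 2^{\homodim_i}|B_i(g'_i, r_i)|$, and consequently
\[
|P| = 2^{\homodim_1 + \homodim_2} |R|.
\]
Using the hypothesis $|R \cap U| \geq \alpha |R|$ and $R \subseteq P$,
\[
\dashint_{P} \chi_{U}(\proh) \wrt \proh
= \frac{|P \cap U|}{|P|}
\geq \frac{|R \cap U|}{|P|}
\geq \frac{\alpha|R|}{2^{\homodim_1 + \homodim_2}|R|}
= 2^{-(\homodim_1 + \homodim_2)} \alpha.
\]
Since $P \in \rect(\proG)$ and $\prog \in P$, the definition of the strong maximal operator yields $\strmaxfn(\chi_U)(\prog) \geq 2^{-(\homodim_1 + \homodim_2)} \alpha$.

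Choosing the geometric constant $C(\proG) := 2^{-(\homodim_1 + \homodim_2 + 1)}$ converts this to the strict inequality $\strmaxfn(\chi_U)(\prog) > C(\proG) \alpha$, valid for every $\prog \in R$. There is no genuine obstacle in this argument; the only point to be careful about is that the supremum defining $\strmaxfn$ need not be attained, which is why $C(\proG)$ is taken strictly smaller than $2^{-(\homodim_1 + \homodim_2)}$ to produce the strict inequality recorded in the statement.
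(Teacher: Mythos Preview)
Your argument is correct for the reading in which $R$ is a product of metric balls (which is what the symbol $\rect(\proG)$ denotes in the paper). In fact, under that reading the enlargement step is unnecessary: $R$ itself lies in $\rect(\proG)$ and contains $\prog$, so $\strmaxfn(\chi_U)(\prog) \geq |R \cap U|/|R| \geq \alpha$ directly, and any $C(\proG) < 1$ yields the strict inequality.

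However, the paper's own proof, and every subsequent application of this lemma, treat $R$ as a \emph{pseudodyadic} rectangle $Q_1 \times Q_2 \in \drect(\proG)$; the word ``rectangle'' in the statement refers to this, and the symbol $\rect$ is evidently a slip for $\drect$. In that case $R$ is not itself a product of balls, so one must manufacture one. The paper uses the inclusions $B(z(Q_i), c\len(Q_i)/3) \subseteq Q_i \subseteq B(z(Q_i), 2C\len(Q_i))$ from Theorem~\ref{thm:Hyt-Kai}: it sets $P := B_1(z(Q_1), 2C\len(Q_1)) \times B_2(z(Q_2), 2C\len(Q_2)) \supseteq R$ and compares $|R|$ to $|P|$ via the inner balls, arriving at $C(\proG) = (c/6C)^{\homodim_1 + \homodim_2}$. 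This is exactly your strategy of enclosing $R$ in a product of balls of comparable measure, just carried out for pseudodyadic cubes rather than for balls. So the idea is right; you only need to adjust the opening line to start from $R = Q_1 \times Q_2$ and invoke the ball--cube comparisons in place of the doubling step.
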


\begin{proof}
By definition, $R$ is a product of pseudodyadic cubes $Q_1 \times Q_2$, and by Theorem \ref{thm:Hyt-Kai},
\[
B(z(Q_i),c \ell(Q_i)/3)\subseteq Q_i \subseteq B(z(Q_i),2C\ell(Q_i)) .
\]
Define $P := B(z(Q_1),2C\ell_1(R)) \times B(z(Q_2),2C\ell_2(R))$.
Then $R \subseteq P$ and
\begin{align*}
|P \cap U| 
&\geq |R \cap U|  
\geq \lambda \abs{R}  \\
&\geq \lambda  | B(z(Q_1),c\ell(Q_1)/3) \times B(z(Q_2),c\ell(Q_2)/3)|  \\
&= \lambda   \Bigl(\frac{c }{6C}\Bigr)^{\homodim_1+\homodim_2}  
| B(z(Q_i),2C\ell(Q_i)) \times B(z(Q_2),2C\ell(Q_2))| \\
&=: \lambda C(\proG) |P|.
\end{align*}
Hence $R$ is a subset of the set 
\[
U^* = \{ \prog \in \proG \colon \strmaxfn \chi_{U}(\prog) > C(\proG) \lambda \},
\] 
as required.
\end{proof}

\begin{theorem}{\cite{CF}}\label{thm Covering}
Let $\{R_j\}_{j\in J}$ be a family of rectangles in $ \proG$ such that $\big|\bigcup_{j\in J} R_j\big|$ is finite.  
Then there is a sequence of rectangles $\{\tilde{R}_k\}\subset \{R_j\}_{j\in J}$ such that
\[
\big|\bigcup_{j\in J} R_j\big|\lesssim \big|\bigcup_{k} \tilde{R}_k\big| 
\qquad\text{and}\qquad
\Biglnorm \sum_k\chi_{\tilde{R}_k} \Bigrnorm_{\expe^{\Leb{}}(\proG)} \lesssim \big|\bigcup_{j\in J} R_j\big|.
\]
\end{theorem}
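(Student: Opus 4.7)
The plan is a Córdoba--Fefferman style greedy selection. We may assume $J$ is countable since each rectangle is a product of pseudo\-dyadic cubes drawn from countable families. Enumerate $\{R_j\}_{j \in \N}$ so that the first-factor side-length $\len_1(R_j)$ is non-increasing (breaking ties arbitrarily), and select a subsequence $\{\tilde R_k\}$ greedily: set $\tilde R_1 := R_1$, and having chosen $\tilde R_1,\ldots,\tilde R_k$, let $\tilde R_{k+1}$ be the first still-unselected $R_j$ in the enumeration satisfying
\[
\biglabs R_j \cap \bigcup_{i \leq k} \tilde R_i \bigrabs  < \tfrac{1}{2} \abs{R_j}.
\]

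For the first conclusion, any rejected $R_j$ satisfies $|R_j \cap \bigcup_k \tilde R_k| \geq \tfrac{1}{2}|R_j|$, so by Lemma \ref{lem:compare-R-P} it lies in the set $\{\prog \in \proG \colon \strmaxfn(\chi_{\bigcup_k \tilde R_k})(\prog) > c\}$ for a geometric $c \in (0,1)$. Since $\strmaxfn$ is dominated by the iteration of one-parameter Hardy--Littlewood maximal operators, it is bounded on $\Leb 2(\proG)$; Chebyshev's inequality then gives
\[
\Biglabs \{\prog \in \proG \colon \strmaxfn(\chi_{\bigcup_k \tilde R_k})(\prog) > c\} \Bigrabs  \lesssim \Biglabs \bigcup_k \tilde R_k \Bigrabs ,
\]
which combined with $\bigcup_k \tilde R_k \subseteq \bigcup_j R_j$ yields the first estimate.

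The exponential bound is the core of the argument. The easy consequence of the greedy selection is the disjointifying estimate $\sum_k |\tilde R_k| \leq 2|\bigcup_k \tilde R_k|$, i.e.\ the $\Leb 1$ control of $\sum_k \chi_{\tilde R_k}$. To upgrade this to an $\expe^{\Leb{}}(\proG)$ bound, one needs distributional decay of the form
\[
\Biglabs \{\prog \in \proG \colon {\textstyle\sum_k} \chi_{\tilde R_k}(\prog) > \lambda\} \Bigrabs
\lesssim \expe^{-c\lambda} \Biglabs \bigcup_k \tilde R_k \Bigrabs
\qquad\forall \lambda \in \Rplus.
\]
Here is where the ordering by $\len_1$ is essential: fixing a slice $\{g_1\} \times G_2$, the restriction to that slice of the two-parameter greedy selection reduces, after reindexing, to a one-parameter greedy selection of $g_2$-cubes arising from a nonincreasing sequence of first-factor scales, to which the classical John--Nirenberg style exponential overlap estimate for one-parameter Carleson--Calderón--Zygmund selections applies. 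Integrating the resulting pointwise exponential bound in $g_2$ over the $g_1$-variable, and absorbing the residual sets via $\Leb p(\proG)$ bounds for $\strmaxfn$ with $p$ close to $1$, would produce the required two-parameter exponential decay.

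I expect the main obstacle to be verifying that the two-parameter greedy condition on the $\tilde R_k$ actually induces a workable one-parameter greedy structure on almost every slice $\{g_1\} \times G_2$, and that the resulting exponential constants survive the integration in $g_1$ without degradation; this is essentially the content of the original argument of \cite{CF}, to be transferred from the Euclidean to the stratified Lie group setting using the geometric doubling of $(G_i,\rho_i)$ and the pseudo\-dyadic structure from Theorem \ref{thm:existence2}.
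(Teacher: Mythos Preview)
Your selection step and the first estimate are fine and match the paper's argument in spirit. The gap is exactly where you locate it: the slicing reduction. On a fixed slice $\{g_1\}\times G_2$, the second-factor cubes $Q_2(\tilde R_k)$ of the selected rectangles containing $g_1$ do \emph{not} inherit a one-parameter greedy condition from the two-parameter one. The condition $\bigl|\tilde R_{k+1}\cap\bigcup_{i\le k}\tilde R_i\bigr|<\tfrac12|\tilde R_{k+1}|$ is a statement about full two-dimensional measure and can be satisfied even when, on the particular slice through $g_1$, the $Q_2$-intervals overlap almost completely. So the ``classical John--Nirenberg style'' input you invoke is not available, and the hoped-for integration in $g_1$ has nothing to integrate. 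This is not a technicality: getting from the two-parameter sparseness to exponential overlap decay is the whole content of the lemma, and your outline does not supply it.

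The paper proceeds quite differently and avoids slicing altogether. It runs the greedy selection \emph{twice}---once forward, then once backward on the (finitely truncated) selected list---so that each surviving rectangle $\tilde R_l$ satisfies a \emph{two-sided} near-disjointness estimate
\[
\Bigl|\tilde R_l\cap\bigcup_{l'\ne l}\tilde R_{l'}\Bigr|\le\tfrac12|\tilde R_l|.
\]
It then observes that any two distinct $\tilde R_l$ through a common point $\prog$ must have strictly opposite orderings of their two side-lengths (one larger in the first factor, the other larger in the second), so the rectangles through $\prog$ can be linearly ordered by $\len_1$. Calling $S$ a \emph{child} of $R$ if $S$ is the next one down in this order, the two-sided sparseness forces the total $\pi_2$-measure of children inside $\pi_2(R)$ to be at most $\tfrac12|\pi_2(R)|$; iterating gives $|\{\prog\in R:\sum_l\chi_{\tilde R_l}(\prog)\ge n\}|\lesssim 2^{-n/2}|R|$, and summing over $R$ (using $\sum_l|\tilde R_l|\lesssim|\bigcup_l\tilde R_l|$) yields the distributional decay you need. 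No ordering by $\len_1$ in the selection, no slicing, and the argument is purely combinatorial on the pseudodyadic lattice.
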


\begin{proof}
The proof is a generalisation of that in \cite{CF}.
However, the pseudo\-dyadic case is some\-what trickier than the dyadic case in $\R^2$, and the argument in \cite{CF} is rather brief and not always precise, so it seems worthwhile to provide a complete proof.

Since there are countably many rectangles, we may assume that $J = \N$ and $j \in \N$.
Let $E$ be the set $\bigcup_j R_j$.

Choose a subsequence $\{R_{\sigma(j)} \colon j \in \N\}$ of $\{R_j \colon j \in \N\}$, using the rules that $\sigma(1) = 1$ and $\sigma(k+1)$ is the least $j > \sigma(k)$ such that
\[
| R_j \cap (R_{\sigma(1)} \cup \dots \cup R_{\sigma(k)}) | \leq \frac{1}{4} |R_j|;
\]
the construction terminates if this is not possible. 
Let $E_K = \bigcup_{1 \leq k \leq K} R_{\sigma(k)} \subseteq E$.
If the construction does not terminate, then by monotone convergence, we may choose $K$ such that $|E_K| \geq |E_\infty|/2$; if the construction terminates, we take $K$ to be the last index of the finite sequence.
The set $E$ consists of rectangles $R_j$ that are not one of the chosen rectangles $R_{\sigma(k)}$ together with rectangles that are subsets of $E_\infty$ if the construction did not terminate, or of $E_K$ otherwise.
Suppose that the construction did not terminate.
Then by Lemma \ref{lem:compare-R-P}, 
\[
R_j \subseteq E_\infty^* := \{ \prog \in \proG \colon \strmaxfn \chi_{E_\infty}(\prog) > {C(\proG)}/{4} \}.
\] 
Hence $E \subseteq E_\infty^* \cup E_\infty$, and the boundedness of $\strmaxfn$ on $\Leb2(\proG)$ shows that 
\[
|E| \leq |E_\infty^*| + |E_\infty| \lesssim |E_\infty| \leq 2 |E_K|.
\]
When the construction terminates, a slightly easier argument shows that $|E| \lesssim |E_K|$.

We now relabel the finite family of rectangles $\{R_{\sigma(1)}, \dots, R_{\sigma(K)}\}$ as $R_K, \dots, R_1$ and repeat the construction, setting $\tau(1) := 1$ and choosing $\tau(l+1)$ to be the least $l > \tau(l)$ such that
 \[
| R_k \cap (R_{\tau(1)} \cup \dots \cup R_{\tau(l)}) | \leq \frac{1}{4} |R_k|;
\]
we end up with $L$ terms, say.
Since
 \[
| R_k \cap (R_{k+1} \cup \dots \cup R_{K}) | \leq \frac{1}{4} |R_k|
\]
by construction, it follows that
\begin{equation}\label{eq:disjointness}
| R_{\tau(l+1)} \cap ((R_{\tau(1)} \cup \dots \cup R_{\tau(l)}) \cup (R_{\tau(l+2)} \cup \dots \cup R_{\tau(L)}))| \leq \frac{1}{2}  | R_{\tau(l+1)} |.
\end{equation}

Write $\tilde{R}_l$ for $R_{\tau(l)}$ and $\tilde{E}$ for $\bigcup_{l} R_{\tau(l)}$.
Much as before, $|E| \lesssim \biglabs\tilde{E}\bigrabs$.
Note that
\begin{equation}\label{eq:measure-union-like-sum-measures}
\biglabs \tilde{E}\bigrabs 
\leq \sum_{l} \biglabs \tilde{R}_l \bigrabs
\leq 2 \sum_{l} \bigglabs \tilde{R}_l \setminus \bigglpar \bigcup_{l' \neq l} \tilde{R}_{l'} \biggrpar \biggrabs
= 2 \bigglabs \bigcup_{l} \tilde{R}_l \setminus \bigglpar \bigcup_{l' \neq l} \tilde{R}_{l'} \biggrpar \biggrabs
\leq 2 \biglabs \tilde{E} \bigrabs .
\end{equation}

We now claim that
\begin{equation}\label{eq:claim}
\biglabs \{ \prog \in \proG \colon \sum_{l} \chi_{\tilde{R}_l} \geq n \} \bigrabs
\lesssim 2^{-n/2} |E|.
\end{equation}
From this claim, it follows that
\[
\begin{aligned}
\int_{\proG } \Psi\Biglpar \sum_{l} \chi_{\tilde{R}_L}(\prog) / \lambda \Bigrpar \wrt\prog 
&\lesssim \sum_{n=1}^{\infty}  \Psi(n/\lambda) 2^{-n/2} |E|   \\
&\lesssim \sum_{n=1}^{\infty}  \exp(n/\lambda) 2^{-n/2} |E|   \\
&= \sum_{n=1}^{\infty} \exp( n[ 1/\lambda - \log(2)/2]) |E| \\
&\lesssim |E|
\end{aligned}
\]
as long as $1/\lambda < \log(2)/2$, and so $\sum_{l=1}^{L} \chi_{R_l} \in \expe^{\Leb{}}(G)$.
It remains to prove \eqref{eq:claim}.

Consider $\prog \in \proG$ that lies in two distinct rectangles from the family $\{ \tilde{R}_l\}$, $R$ and $S$ say.
If $\len_1(S) = \len_1(R)$, then $\pi_1(R) = \pi_1(S)$, since both $\pi_1(R)$ and $\pi_1(S)$ are pseudo\-dyadic cubes containing $\pi_1(\prog)$; now $\pi_2(R) \subseteq \pi_2(S)$ or $\pi_2(R) \supseteq \pi_2(S)$, so $R\cap S$ is either $R$ or $S$, and this contradicts \eqref{eq:disjointness}.
Hence we may assume, without loss of generality, that $\len_1(S) < \len_1(R)$.
A similar argument then shows that $\len_2(S) > \len_2(R)$.
Thus, if there are $n$ distinct rectangles in $\{ \tilde{R}_l\}$ that contain $\prog$, then we may label them $R_1(\prog)$, \dots, $R_n(\prog)$, in such a way that $\len_1(R_j(\prog))$ decreases with $j$ and $\len_2(R_j(\prog))$ increases with $j$; then \[
R_1(\prog) \cap \dots \cap R_n(\prog) = \pi_1(R_n(\prog)) \times \pi_2(R_1(\prog)),
\] 
and
\[
\labs R_1(\prog) \cap \dots \cap R_n(\prog) \rabs 
= \labs \pi_1(R_n(\prog)) \times \pi_2(R_1(\prog)) \rabs
\lesssim \kappa ^{-n \homodim_1} \labs R_1(\prog) \rabs .
\]

We say that a rectangle $T \in \{ \tilde{R}_l \}$ is a \emph{descendant} of a rectangle $R \in \{ \tilde{R}_l\}$, and we write $T \succ R$, if $R\cap T \neq \emptyset$ and both $\len_1(T) > \len_1(R)$ and $\len_2(T) < \len_2(R)$, and we say that $T$ is a \emph{child} of $R$ if $T \succeq R$ and if $S \in \{ \tilde{R}_l\}$ and $T \succeq S \succeq R$ then $S = T$ or $S= R$.
We may define \emph{ancestors} and \emph{parents} similarly, with the relations reversed.

Fix a rectangle $R \in \{ \tilde{R}_l\}$, and consider $\prog \in R$ that lies in at least $n$ distinct rectangles of 
$\{ \tilde{R}_l\}$.
Then $\prog$ lies in at least $\lceil n/2\rceil$ distinct rectangles $S$ such that $S \succeq R$, or   
$\prog$ lies in at least $\lceil n/2\rceil$ distinct rectangles $S$ such that $S \preceq R$.
Thus 
\[
\begin{aligned}
&\labs\lset \prog \in R \colon \sum\nolimits_{l} \chi_{\tilde{R}_l}(\prog) \geq n \rset\rabs \\
&\qquad\leq 
\labs\lset \prog \in R \colon \sum\nolimits_{l}^{\succeq} \chi_{\tilde{R}_l}(\prog) \geq \frac{n}{2} \rset\rabs
+
\labs\lset \prog \in R \colon \sum\nolimits_{l}^{\preceq} \chi_{\tilde{R}_l}(\prog) \geq \frac{n}{2} \rset\rabs ,
\end{aligned}
\]
where $\sum^\succeq_{l}$ indicates that we sum over $l$ such that $\tilde{R}_l \succeq R$, and $\sum^\preceq_{l}$ is defined analogously.
We estimate the measure of one of these two sets: the other may be estimated similarly.

Consider all $S \in \{ \tilde{R}_l\}$ such that $S \succeq R$.
Inside this collection of rectangles, we may identify the children $S_1$, \dots, $S_p$ of $R$, which are pairwise disjoint, the children of the children of $R$, which are again pairwise disjoint, and so on.
Observe that 
\[
R \cap (S_1 \cup \dots \cup S_p) 
= \pi_1(R) \times (\pi_2(S_1) \cup \dots \cup \pi_2(S_p)) 
\subseteq \pi_1(R) \times \pi_2(R),
\]
and the near disjointness condition \eqref{eq:disjointness} implies that
\[
\labs \pi_2(S_1) \cup \dots \cup \pi_2(S_p) \rabs \leq \frac{1}{2} \labs \pi_2(R) \rabs.
\]
Then the measure of the set of all $\prog \in R$ that also belong to another rectangle $S \succ R$ is at most $ \labs R \rabs/2$.

Likewise, similar inequalities hold for the children of the children of $R$, which we may label as $T_1$, \dots, $T_q$, and so
\[
\labs \pi_2(T_1) \cup \dots \cup \pi_2(T_q) \rabs 
\leq \frac{1}{2} \labs \pi_2(S_1) \cup \dots \cup \pi_2(S_p) \rabs 
\leq \frac{1}{2^2} \labs \pi_2(R) \rabs.
\]
and the measure of the set of all $\prog \in R$ that also belong to two more rectangles $S, T \succ R$ is at most $2^{-2} \labs R \rabs$.
Continuing inductively, the measure of the set of all $\prog \in R$ that lie in at least $m$ distinct rectangles $S\succ R$ is at most $2^{-m} \labs R\rabs$.

Combining this estimate with an almost identical estimate for the measures of sets defined using ancestors and parents, we conclude that the measure of the set of all $\prog \in R$ that belong to at least $n$ rectangles of $\{ \tilde{R}_l \}$ is at most
\[
2 \times 2^{-\lceil (n-1)/2\rceil} \labs R\rabs \lesssim 2^{-n/2} \labs R\rabs.
\]
Then the measure of all $\prog \in G$ that belong to at least $n$ rectangles of $\{ \tilde{R}_l \}$ is at most a multiple of
\[
\sum_{l} 2^{-n/2} \biglabs \tilde{R}_l \bigrabs 
= 2^{-n/2} \sum_{l} \biglabs \tilde{R}_l \bigrabs
\lesssim 2^{-n/2} |E|,
\]
which is what we needed to prove.
\end{proof}

Now we can prove an endpoint estimate for the strong maximal function on $\proG$.
\begin{theorem}\label{thm Ms}
The strong maximal function satisfies the following:
\[ 
\big| \big\{\prog\in \proG\colon |\strmaxfn f(\prog)|>\lambda\big\}\big| 
\lesssim F_\Phi(f/\lambda)
\qquad\forall\lambda\in \Rplus.
\]
for all $f \in \LlogplusL(\proG)$.
\end{theorem}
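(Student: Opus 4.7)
The plan is to adapt the classical C\'ordoba--Fefferman argument for the strong maximal function, with Theorem \ref{thm Covering} as the pivotal tool. First I reduce to the large values of $f$: split $f = f_1 + f_2$ with $f_1 := f \chi_{\{|f|>\lambda\}}$. Since $|f_2| \leq \lambda$ pointwise, $\strmaxfn f_2 \leq \lambda$, and the subadditivity of $\strmaxfn$ gives $\{\strmaxfn f > 2\lambda\} \subseteq \{\strmaxfn f_1 > \lambda\}$. Replacing $\lambda$ by $\lambda/2$ and using $F_\Phi(2f/\lambda) \leq \Phi(2) F_\Phi(f/\lambda)$ from \eqref{eq:nearly-linear}, it suffices to bound $|E|$, where $E := \{\strmaxfn f_1 > \lambda\}$, by a constant multiple of $F_\Phi(f/\lambda)$.

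For each $\prog \in E$ I would select a rectangle $R_\prog \ni \prog$ with $\int_{R_\prog}|f_1| > \lambda |R_\prog|$ and extract a countable subcover $\{R_j\}$ of $E$, after a standard exhaustion ensuring $B := |\bigcup_j R_j|$ is finite. Theorem \ref{thm Covering} then yields $\{\tilde R_k\} \subseteq \{R_j\}$ with $B \lesssim |\bigcup_k \tilde R_k|$ and exponential control on the overlap function $u := \sum_k \chi_{\tilde R_k}$. The defining inequality $\lambda |\tilde R_k| < \int_{\tilde R_k}|f_1|$ gives
\[
B
\lesssim \Biglabs \bigcup_k \tilde R_k \Bigrabs
\leq \sum_k |\tilde R_k|
< \frac{1}{\lambda} \int_\proG |f_1(\prog)|\, u(\prog) \wrt\prog.
\]

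The key step is to dominate this last integral via the scaled Young inequality $st \leq \Phi(Ks) + \Psi(t/K)$ (valid for any $K > 0$, by applying \eqref{eq:conjugate} to the rescaled pair $(Ks, t/K)$). Taking $s = |f_1|/\lambda$, $t = u$, and using \eqref{eq:nearly-linear} together with $|f_1| \leq |f|$,
\[
\frac{1}{\lambda} \int |f_1| u
\leq F_\Phi\biglpar K f_1/\lambda \bigrpar + F_\Psi(u/K)
\leq \Phi(K)\, F_\Phi(f/\lambda) + F_\Psi(u/K) .
\]
A layer-cake summation-by-parts argument using the quantitative distribution estimate $|\{u \geq n\}| \lesssim 2^{-n/2} B$, which is established inside the proof of Theorem \ref{thm Covering} (see \eqref{eq:claim}), yields $F_\Psi(u/K) \leq \varepsilon(K) B$ with $\varepsilon(K) \to 0$ as $K \to \infty$. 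Choosing $K$ as a sufficiently large fixed geometric constant, the combined inequality becomes $B \leq C\Phi(K) F_\Phi(f/\lambda) + C\varepsilon(K) B$, and absorbing the $B$-term on the right delivers $|E| \leq B \lesssim F_\Phi(f/\lambda)$.

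I expect the absorption to be the main obstacle. A naive attack via the generalized H\"older inequality \eqref{eq:general-Holder}, namely $\int |f_1| u \leq 2 \norm{f_1}_{\LlogplusL(\proG)} \bignorm{u}_{\expe^{\Leb{}}(\proG)}$, only yields $\lambda \lesssim \norm{f}_{\LlogplusL(\proG)}$, a constraint on $\lambda$ with no useful control of $|E|$. The hyperweak estimate thus forces us to sidestep the homogeneous Luxemburg normalization of $u$ and to work instead with the non-homogeneous functional $F_\Psi$ at a fixed but large scale $K$, exploiting the quantitative exponential distributional decay of $u$ so that the coefficient of $B$ on the right may be made arbitrarily small by choice of $K$.
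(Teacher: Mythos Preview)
Your approach is the same C\'ordoba--Fefferman absorption argument the paper runs, and the heart of it --- the scaled Young inequality $st \le \Phi(Ks) + \Psi(t/K)$, the distributional decay $|\{u\ge n\}|\lesssim 2^{-n/2}B$ from \eqref{eq:claim}, and the choice of a large $K$ to absorb the $B$-term --- matches the paper's proof almost line for line.  Two points need tightening.

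First, the covering theorem you invoke applies to pseudodyadic \emph{rectangles}: the proof of Theorem~\ref{thm Covering} uses the nesting property of pseudodyadic cubes (if two cubes of the same generation meet, they coincide) in an essential way, and this fails for products of metric balls.  But the sets handed to you by the definition of $\strmaxfn$ are products of balls $P\in\rect(\proG)$, not elements of $\drect(\proG)$.  The paper closes this gap by first dominating $\strmaxfn$ by a finite sum of pseudodyadic strong maximal functions $\strmaxfn^{\indext_1,\indext_2}$ via \eqref{eq:domination} (this is where the adjacent systems of Theorem~\ref{thm:existence2} enter), and then proving the estimate for each $\strmaxfn^{\indext_1,\indext_2}$; for those, the selected $R_\prog$ really is a pseudodyadic rectangle.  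You should insert that reduction before selecting the $R_\prog$.

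Second, the absorption step needs $B<\infty$ a priori, and ``a standard exhaustion ensuring $B$ finite'' is not quite a proof: the union of the $R_\prog$ could have infinite measure even over a bounded piece of $E$.  The paper's clean fix is to use Proposition~\ref{prop density} to assume $f\in\Leb2\cap\LlogplusL$, so $\Leb2$-boundedness of $\strmaxfn$ gives $|E|<\infty$ immediately; with that in hand your preliminary splitting $f=f_1+f_2$ becomes unnecessary.  Alternatively, run the absorption on arbitrary \emph{finite} subfamilies of $\{R_\prog\}$ (for which the union trivially has finite measure) and pass to the supremum by inner regularity.
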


\begin{proof}
We recall a fundamental estimate on the strong maximal function and pseudo\-dyadic strong maximal functions associated with adjacent pseudo\-dyadic systems, the proto\-type of which (in the setting of the product $\R\times \R$) was proved in \cite[Theorem 6.1]{LPW}.
More explicitly, we define the pseudo\-dyadic strong maximal function as follows:
\begin{align*}
\strmaxfn^{{\indext}_1,{\indext}_2}(f)(\prog)
:=\sup\lset \dashint_{R}\labs f(\proh) \rabs \wrt \proh\colon R \ni \prog, R \in \dcubes^{{\indext}_1} \times \dcubes^{{\indext}_2}  \rset,
\end{align*}
where ${\indext}_i=1,2,\dots ,\Indext_i$. 
Then
\begin{align}\label{eq:domination}
\strmaxfn(f)(\prog)\lesssim \sum_{{\indext}_1=1}^{\Indext_1}\sum_{{\indext}_2=1}^{\Indext_2} \strmaxfn^{{\indext}_1,{\indext}_2}(f)(\prog)
\qquad\forall \prog\in\proG.
\end{align}
This inequality means that instead of considering general products of balls, it suffices to consider rectangles, and Theorem \ref{thm Covering} may be applied.

In light of \eqref{eq:domination}, sublinearity and Proposition \ref{prop density}, it suffices to show that 
\begin{equation}\label{eq:enough-to-prove}
\big| \big\{g \in \proG\colon |\strmaxfn^{{\indext}_1,{\indext}_2} f(\prog)|>1\big\}\big| 
\lesssim F_\Phi(f)
\qquad\forall f \in \Leb2 \cap \LlogplusL (\proG),
\end{equation}
when ${\indext}_i=1,2,\dots ,\Indext_i$; we may assume that $f$ takes nonnegative real values.

Let $E=\big\{\prog \in \proG\colon |\strmaxfn^{{\indext}_1,{\indext}_2} f(\prog)|>1\big\}$.  
Since $f\in \Leb 2(\proG)$ and $\strmaxfn^{{\indext}_1,{\indext}_2}$ is $\Leb 2$ bounded, $|E|$ is finite.
For every $\prog\in E$ there exists $R_\prog\in \drect^{{\indext}_1,{\indext}_2}(\proG)$ satisfying
\begin{equation}\label{eq ms}
\dashint_{R_\prog} f(\proh) \wrt \proh > 1.  
\end{equation}
Then $E=\bigcup_{\prog\in E}  R_\prog$ by definition.

By Theorem \ref{thm Covering}, there is a sequence of rectangles $\{\tilde{R}_l\} \subseteq \{R_\prog\}_{\prog\in E}$ such that
\begin{equation}\label{eq cover}
|E| = \Bigl|\bigcup_{\prog\in E} R_\prog\Bigr| \lesssim \Bigl|\bigcup_{l} \tilde{R}_l\Bigr|
\qquad\text{and}\qquad
\Biglnorm\sum_{l} \chi_{\tilde{R}_l}\Bigrnorm_{\expe^{\Leb{}(\proG)}} 
\lesssim \Bigl|\bigcup_{\prog\in E} R_\prog\Bigr|.
\end{equation}

Write $\tilde{E}$ for $\bigcup_l \tilde{R}_l$ and $\tilde{E}_n$ for $\lset \prog \in \proG : \sum_{l} \chi_{\tilde{R}_l}(\prog) = n \rset$.
From \eqref{eq ms}, \eqref{eq:pre-Holder}, \eqref{eq:nearly-linear} and \eqref{eq:Jensen}, we deduce that $|E| \lesssim |\tilde{E}|$ and, for all $\lambda \in [1,\infty)$,
\begin{align*}
|\tilde{E}| 
&= \Biglabs \bigcup_{l} \tilde{R}_l \Bigrabs
 \le \sum_{l} |\tilde{R}_l| 
< \sum_{l} \int_{\tilde{R}_l} f(\prog)\wrt \prog\\
&= \int_{\tilde{E}} \sum_{l} \chi_{\tilde{R}_l}(\prog)f(\prog)\wrt \prog \\
&\leq \int_{\tilde{E}} \Phi(\lambda^2 f(\prog)) \wrt \prog 
+ \int_{\tilde{E}} \Psi( \sum_{l} \chi_{\tilde{R}_l}(\prog)/\lambda^2) \wrt \prog\\
&\leq \Phi(\lambda^2)\int_{\proG} \Phi(f(\prog)) \wrt \prog 
+ \frac{1}{\lambda} \int_{\tilde{E}} \Psi( \sum_{l} \chi_{\tilde{R}_l}(\prog)/\lambda) \wrt \prog.
\end{align*}
Since $|\tilde{E}_n| \leq C_1 2^{-n/2} |\tilde{E}|$, where $C_1$ is a geometric constant, 
\begin{align*}
|\tilde{E}| 
&\leq \Phi(\lambda^2) F_\Phi(f) 
+ \frac{1}{\lambda} \sum_{n=1}^{\infty} \expe^{n/\lambda}  \biglabs \tilde{E}_n\bigrabs
 \\
&\leq \Phi(\lambda^2) F_\Phi(f)  
+ \frac{C_1}{\lambda} \sum_{n=1}^{\infty} \expe^{n/\lambda} 2^{-n/2} \biglabs \tilde{E}\bigrabs .
\end{align*}
We fix $\lambda$ that is large enough that the series converges and the right hand term is less than $\biglabs \tilde{E} \bigrabs/2$; then $\biglabs \tilde{E} \bigrabs \leq 2 \Phi(\lambda^2) F_\Phi(f)$, as required.
\end{proof}

\begin{corollary}\label{cor:nontan-max-leq-strong-max}
The  maximal operator $\maxfn{\zeta}{\eta }$ satisfies the endpoint estimate:
\[ 
\bigl| \bigl\{\prog\in \proG\colon \labs \maxfn{\zeta}{\eta }(f)(\prog)\rabs > \lambda \bigr\}\bigr| 
\lesssim F_\Phi(f/\lambda)
\qquad\forall \lambda \in \Rplus 
\]
for all $f \in \LlogplusL(\proG)$.
\end{corollary}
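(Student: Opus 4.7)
The plan is to deduce the corollary from Theorem \ref{thm Ms} by establishing the pointwise majorisation
\[
\maxfn{\zeta}{\eta}(f)(\prog) \lesssim C(\eta)\, \strmaxfn(f)(\prog)
\qquad\forall \prog \in \proG,
\]
for some finite constant $C(\eta)$ depending only on $\eta$ and the decay parameters of $\zeta^{[1]}, \zeta^{[2]}$. Once this pointwise domination is in hand, the level set $\{\prog\colon \maxfn{\zeta}{\eta}(f)(\prog) > \lambda\}$ is contained in $\{\prog\colon \strmaxfn(f)(\prog) > \lambda/C(\eta)\}$, so Theorem \ref{thm Ms} gives a bound by $F_\Phi(C(\eta)f/\lambda)$, which by \eqref{eq:nearly-linear} is at most $\Phi(C(\eta)) F_\Phi(f/\lambda)$; the $\eta$-dependent factor is then absorbed into the implicit constant.

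To prove the pointwise bound, fix $\prog \in \proG$ and $(\proh,\prot) \in \Gamma^\eta(\prog)$, so that $\rho_i(g_i, h_i) \leq \eta t_i$ for $i = 1,2$. The decay in \eqref{eq:molecule}, combined with the normalised dilation identity, gives
\[
\bigl|\zeta^{[i]}_{t_i}(k_i^{-1} h_i)\bigr|
\lesssim t_i^{-\homodim_i} \bigl(1 + \rho_i(k_i^{-1} h_i)/t_i\bigr)^{-\homodim_i-\epsilon}.
\]
The triangle inequality $\rho_i(k_i^{-1} g_i) \leq \rho_i(k_i^{-1} h_i) + \eta t_i$ yields
\[
1 + \rho_i(k_i^{-1} h_i)/t_i \geq (1+\eta)^{-1} \bigl(1 + \rho_i(k_i^{-1} g_i)/t_i\bigr),
\]
from which
\[
\bigl|\zeta^{[i]}_{t_i}(k_i^{-1} h_i)\bigr|
\lesssim (1+\eta)^{\homodim_i+\epsilon}\, \mu^{[i]}_{t_i}(k_i^{-1} g_i),
\]
where $\mu^{[i]}(g_i) := (1 + \rho_i(g_i))^{-\homodim_i-\epsilon}$ is a radial decreasing function in $\Leb 1(G_i)$. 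This reduces the evaluation at the off-centre point $\proh$ to an evaluation at the centre $\prog$, at the cost of the $\eta$-dependent multiplicative constant.

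Substituting this into the integral representation
\[
f \ast \zeta^{[1]}_{t_1}\otimes\zeta^{[2]}_{t_2}(\proh)
= \int_{\proG} f(\prok)\, \zeta^{[1]}_{t_1}(k_1^{-1} h_1)\, \zeta^{[2]}_{t_2}(k_2^{-1} h_2) \wrt \prok,
\]
and then applying the biradial majorant argument that underlies \eqref{eq:Poisson-leq-strong}, we obtain
\[
\bigl|f \ast \zeta^{[1]}_{t_1}\otimes\zeta^{[2]}_{t_2}(\proh)\bigr|
\lesssim (1+\eta)^{\homodim_1+\homodim_2+2\epsilon}\, \strmaxfn(f)(\prog),
\]
with constant depending only on $\|\mu^{[1]}\|_{\Leb 1(G_1)}$ and $\|\mu^{[2]}\|_{\Leb 1(G_2)}$. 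Since the right hand side does not depend on the particular choice of $(\proh,\prot)$ in the cone $\Gamma^\eta(\prog)$, taking the supremum establishes the pointwise domination. There is no substantive obstacle here; the only point requiring care is the triangle-inequality bookkeeping that transfers the decay of $\zeta^{[i]}_{t_i}$ centred at $h_i$ to a decay centred at $g_i$ with a constant that blows up at most polynomially in $1+\eta$.
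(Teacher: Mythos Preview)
Your proof is correct and follows exactly the paper's approach: the paper's proof is the single sentence ``It follows from \eqref{eq:Poisson-leq-strong} with a little care that $\maxfn{\zeta}{\eta}(f)\lesssim \strmaxfn f$ pointwise,'' and you have spelled out precisely that ``little care'' (shifting the decay centre from $\proh$ to $\prog$ via the triangle inequality, then invoking the biradial majorant bound). The subsequent appeal to Theorem~\ref{thm Ms} together with \eqref{eq:nearly-linear} to absorb the constant $C(\eta)$ is also correct.
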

\begin{proof}
It follows from \eqref{eq:Poisson-leq-strong} with a little care that $\maxfn{\zeta}{\eta }(f)\lesssim \strmaxfn f$ pointwise. 
\end{proof} 

The following lemma is another well known consequence of the boundedness of the strong maximal operator.
Recall that $\maxrect(U)$ denotes the collection of all maximal rectangles contained in $U$.

\begin{lemma}\label{lem:enlargement}
Let $U$ be an open subset of $\proG$ of finite measure and $\beta \in \Rplus$. 
Given $R$ in $\maxrect(U)$, let $\beta R$ be the set $\proz(R)\delta_\beta (\proz(R)^{-1}R)$.
Then 
\[
\bigglabs \bigcup_{R \in \maxrect(U)} \beta R \biggrabs \lesssim (\beta+1)^{\homodim_1 +\homodim_2} \labs U\rabs.
\]
\end{lemma}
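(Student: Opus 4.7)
The plan is to reduce the product enlargement to two successive one-parameter enlargements and apply the weak-type $(1,1)$ estimate for the one-factor Hardy--Littlewood maximal function separately on each of $G_1$ and $G_2$. Given $R = Q_1 \times Q_2 \in \maxrect(U)$, write $\beta Q_i := z(Q_i)\delta_\beta(z(Q_i)^{-1}Q_i)$, so that $\beta R = (\beta Q_1) \times (\beta Q_2)$. By Theorem \ref{thm:pseudodyadiccubes}, $Q_i \subseteq B_i(z(Q_i), 2C\len(Q_i))$, and hence $\beta Q_i \subseteq B_i(z(Q_i), 2C(1+\beta)\len(Q_i)) =: \tilde B_i$. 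The ball $\tilde B_i$ contains both $Q_i$ and $\beta Q_i$, and has measure comparable to $(1+\beta)^{\homodim_i} |Q_i|$. Consequently, for any $\prog_i \in \beta Q_i$ and any measurable $W_i \subseteq G_i$ with $Q_i \subseteq W_i$,
\[
\oper{M}_i \chi_{W_i}(\prog_i) \geq \dashint_{\tilde B_i} \chi_{W_i} \geq \frac{|Q_i|}{|\tilde B_i|} \gtrsim (1+\beta)^{-\homodim_i}.
\]

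Next, I would introduce the intermediate partial enlargement
\[
V := \bigcup_{R = Q_1 \times Q_2 \in \maxrect(U)} (\beta Q_1) \times Q_2,
\]
which is dilated only in the first factor. Fixing $\prog_2 \in G_2$, the slice $V_{\prog_2} := \{\prog_1 \in G_1 \colon (\prog_1, \prog_2) \in V\}$ is the union of those $\beta Q_1$ arising from $R$ with $\prog_2 \in Q_2$, and for every such $R$ the slice $U_{\prog_2} := \{\prog_1 \in G_1 \colon (\prog_1, \prog_2) \in U\}$ contains $Q_1$. The estimate above then places $V_{\prog_2}$ inside $\{\prog_1 \in G_1 \colon \oper{M}_1 \chi_{U_{\prog_2}}(\prog_1) \gtrsim (1+\beta)^{-\homodim_1}\}$, so the weak-$(1,1)$ boundedness of the Hardy--Littlewood maximal operator $\oper{M}_1$ on $G_1$ gives $|V_{\prog_2}| \lesssim (1+\beta)^{\homodim_1} |U_{\prog_2}|$. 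Integrating in $\prog_2$ via Tonelli yields $|V| \lesssim (1+\beta)^{\homodim_1} |U|$.

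The final step iterates the argument in the second factor. For $\prog_1 \in G_1$, the slice $\bigl(\bigcup_R \beta R\bigr)^{\prog_1}$ is the union of those $\beta Q_2$ for which $\prog_1 \in \beta Q_1$; by construction of $V$, the corresponding $Q_2$ lies in the slice $V^{\prog_1}$. Applying the same argument with $\oper{M}_2$ on $G_2$ and $V^{\prog_1}$ in place of $U_{\prog_2}$ gives $\bigl|\bigl(\bigcup_R \beta R\bigr)^{\prog_1}\bigr| \lesssim (1+\beta)^{\homodim_2} |V^{\prog_1}|$, and integrating in $\prog_1$ produces
\[
\Bigl|\bigcup_{R \in \maxrect(U)} \beta R\Bigr| \lesssim (1+\beta)^{\homodim_2} |V| \lesssim (1+\beta)^{\homodim_1 + \homodim_2} |U|.
\]
The one subtle point is the choice of the intermediate set $V$: it decouples the two parameters and lets one apply one-parameter weak-$(1,1)$ estimates, thereby avoiding the logarithmic loss that would result from invoking the endpoint estimate of Theorem \ref{thm Ms} for the strong maximal operator on the product directly.
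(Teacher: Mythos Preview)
Your argument is correct and takes a genuinely different route from the paper. The paper argues in one stroke on the product: it shows that for each $R\in\maxrect(U)$ one has $\beta R\subseteq\{\prog:\strmaxfn(\chi_U)(\prog)\gtrsim(\beta+1)^{-\homodim_1-\homodim_2}\}$, and then invokes the hyperweak boundedness of $\strmaxfn$ (Theorem~\ref{thm Ms}) to bound the measure of that superlevel set. You instead decouple the two factors via the intermediate set $V=\bigcup_R(\beta Q_1)\times Q_2$, slice, and apply the one-parameter weak-$(1,1)$ inequality for $\oper{M}_1$ and then for $\oper{M}_2$.

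What each approach buys: the paper's argument is a one-liner once $\strmaxfn$ is in hand, with no auxiliary set $V$ to track. Your approach is more elementary---it uses only the classical weak-$(1,1)$ bound on each factor and does not depend on Theorem~\ref{thm Covering} or Theorem~\ref{thm Ms}---and it delivers the stated power $(\beta+1)^{\homodim_1+\homodim_2}$ without loss. In fact your closing remark is well taken: a literal application of Theorem~\ref{thm Ms} at level $\lambda\simeq(\beta+1)^{-\homodim_1-\homodim_2}$ yields $F_\Phi(\chi_U/\lambda)\simeq(\beta+1)^{\homodim_1+\homodim_2}\log\bigl(\expe+(\beta+1)^{\homodim_1+\homodim_2}\bigr)\,|U|$, so the paper's displayed bound $(\beta+1)^{\homodim_1+\homodim_2}F_\Phi(\chi_U)$ suppresses a logarithmic factor. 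That extra factor would be harmless in the subsequent applications (in \eqref{eq:Rdagger-est} it would turn $2^{k/2}$ into $k\,2^{k/2}$, and $\sum_{k\geq1}k\,2^{-k/2}$ still converges), but your iteration avoids it entirely and matches the lemma as stated.
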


\begin{proof}
Recall that $B(z(Q),c\len(Q) /3)\subseteq Q \subseteq B(z(Q),2C\len(Q))$ for all $Q \in \dcubes(G)$ and $P(\proz,\blen)$ is the product $B_1(z_1,\len_1) \times B_2(z_2,\len_2)$.
It follows that, if $R \in \drect$ and $\prog \in \beta R$, then 
\[
P(\proz(R), c \blen(R) /3) \subseteq R \subseteq P(\prog, 2C(\beta+1) \blen(R)), 
\]
and we deduce, much as in the proof of Lemma \ref{lem:compare-R-P}, that 
\[
\beta R \subseteq 
\bigglset \prog \in \proG \colon \strmaxfn (\chi_R)(\prog) \geq  C(\proG) (\beta+1)^{-\homodim_1-\homodim_2} \biggrset .
\]
Hence
\[
\bigglabs \bigcup_{R \in \maxrect(U)} \beta R \biggrabs
\leq \bigglabs \bigglset \prog \in \proG \colon \strmaxfn (\chi_R)(\prog) \geq C(\proG) (\beta+1)^{-\homodim_1-\homodim_2} \biggrset \biggrabs,
\]
and the hyperweak boundedness of $\strmaxfn$ implies that
\[
\bigglabs \bigcup_{R \in \maxrect(U)} \beta R  \biggrabs
\lesssim (\beta+1)^{\homodim_1 + \homodim_2}  F_\Phi(\chi_U)
\lesssim (\beta+1)^{\homodim_1+\homodim_2} \labs U\rabs,
\]
as claimed.
\end{proof}

\section{The atomic decomposition}

In this section, we first prove a hyperweak boundedness estimate for the Lusin area function $\sqfn{\fullnabla \pois}{1} (f)$, and then use this estimate to decompose $\LlogplusL(\proG)$ functions.

Recall that $\pti$ and $\qti$ denote the convolution kernels of the operators $e^{-t_i \sqrt{\Lap_i}}$ and $t_i \partial_{t_i} e^{-t_i \sqrt{\Lap_i}}$; then $\qti = t_i \partial_{t_i} \pti$.
We write $\ptt:=\ptone \otimes \pttwo$ and $\qtt:=\qtone \otimes \qttwo$.

\subsection{Hyperweak boundedness and the area function $\sqfn{\fullnabla \pois}{1} (f)$}

The first step is to apply Corollary \ref{cor:nontan-max-leq-strong-max}.

\begin{proposition}\label{thm MUs}
The nontangential maximal operator $\maxfn{\pois}{\eta }$ is hyperweakly bounded.
That is,
\[ 
\big| \big\{\prog \in \proG\colon |\maxfn{\pois}{\eta } f(\prog)|>\lambda \big\}\big| 
\lesssim F_\Phi(f/\lambda)
\qquad\forall \lambda \in \Rplus 
\]
for all $f \in \LlogplusL(\proG)$.
\end{proposition}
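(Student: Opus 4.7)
The plan is to observe that Proposition \ref{thm MUs} is simply the instance of Corollary \ref{cor:nontan-max-leq-strong-max} obtained by choosing the auxiliary function to be the Poisson kernel. All the real work---replacing the nontangential supremum by a pointwise domination by $\strmaxfn$, and then invoking Theorem \ref{thm Ms}---has already been carried out in that corollary and in Theorem \ref{thm Ms}. My task is only to confirm that $\poisone$ qualifies as an admissible kernel.

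First I would extract from Lemma \ref{lem:heat-pois-estim} the facts that $\poisone^{[i]}$ is nonnegative, integrates to $1$, satisfies the pointwise decay
\[
\poisone^{[i]}(g_i) \eqsim \frac{1}{(1+\rho_i(g_i)^2)^{(\homodim_i+1)/2}}
\]
and that its gradient is controlled by $(1+\rho_i(g_i)^2)^{-(\homodim_i+2)/2}$. The first bound gives the size condition in \eqref{eq:molecule} with $\epsilon = 1$, while integrating the gradient bound along left-invariant horizontal paths (a standard computation on stratified groups) yields the Hölder smoothness in \eqref{eq:molecule}. Thus each $\poisone^{[i]}$ is a legitimate choice of $\zeta^{[i]}$ in Corollary \ref{cor:nontan-max-leq-strong-max}.

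Second, I would simply appeal to Corollary \ref{cor:nontan-max-leq-strong-max}. The pointwise domination $\maxfn{\pois}{\eta}(f)(\prog) \lesssim \strmaxfn(f)(\prog)$ follows from \eqref{eq:Poisson-leq-strong}; the only subtlety, hidden in the phrase ``with a little care'', is that the supremum defining $\maxfn{\pois}{\eta}$ evaluates $f \ast \ptt$ at points $\proh$ rather than at $\prog$. Because $(\proh,\prot) \in \Gamma^\eta(\prog)$ forces $\rho_i(g_i,h_i) \leq \eta t_i$, any ball of radius $r_i \geq t_i$ about $h_i$ sits inside the ball of radius $(1+\eta) r_i$ about $g_i$, so the factor-wise argument that proves \eqref{eq:decay-max} and \eqref{eq:Poisson-leq-strong} yields the domination at $\proh$ with a constant depending only on $\eta$. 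Combining with Theorem \ref{thm Ms} produces the desired hyperweak estimate.

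There is essentially no new obstacle: the proposition is a direct specialisation of earlier results. If one had to identify a delicate point, it would be the verification of the smoothness clause of \eqref{eq:molecule} for $\poisone$, which is where the stratified group structure (more precisely, left-invariance of the horizontal vector fields and the subordination formula) is genuinely used.
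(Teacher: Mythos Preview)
Your proposal is correct and matches the paper's approach exactly: the paper states Proposition \ref{thm MUs} immediately after the sentence ``The first step is to apply Corollary \ref{cor:nontan-max-leq-strong-max}'' and gives no further argument, so the intended proof is precisely the specialisation you describe. Your additional remarks on verifying \eqref{eq:molecule} for $\poisone^{[i]}$ via Lemma \ref{lem:heat-pois-estim} and on handling the nontangential shift from $\prog$ to $\proh$ simply make explicit what the paper leaves implicit.
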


This weak type estimate for the Poisson maximal operator implies a similar estimate for the Lusin area function $\sqfn{\fullnabla \pois}{1}$, which is the key to establishing the atomic decomposition for $\LlogplusL (\proG)$ functions. 
More explicitly, we denote the tensor $\fullnabla\one\poisoneone(g_1) \otimes \fullnabla\two\poisonetwo(g_2)$ by $\fullnabla \pois(\prog)$ and define
\[ 
\sqfn{\fullnabla \pois}{\eta } f(\prog)
:=\bigglpar \int\!\!\!\dashint_{\Gamma(\prog)}
\left|  (f \ast (\fullnabla  \pois)_{\prot}) (\proh) \right| ^{2}
\wrt\proh  \wrtprotonprot \biggrpar^{1/2},
\]
where 
\begin{equation}\label{eq:def-intdashint}
\int\!\!\dashint_{\Gamma(\prog)} f(\proh,\prot) \wrt\proh \wrtprotonprot 
:= \int_{\proT}\dashint_{P(\prog,\prot)} f(\proh,\prot) \wrt\proh \wrtprotonprot.
\end{equation}

\begin{theorem}\label{thm Area}
The following estimate holds:
\begin{equation*}
\labs \lset \prog\in \proG\colon \sqfn{\fullnabla \pois}{1} (f)(\prog)>\lambda   \rset \rabs
\lesssim F_\Phi(f/\lambda)
\qquad\forall \lambda \in \Rplus 
\end{equation*}
for all $f \in \LlogplusL(\proG)$.
\end{theorem}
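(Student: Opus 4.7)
The proof follows from the good-$\lambda$ inequality \eqref{GoodLambda} combined with the hyperweak boundedness of the nontangential Poisson maximal function (Proposition \ref{thm MUs}). By Proposition \ref{prop density}, it suffices to consider $f \in \Leb{2}(\proG) \cap \LlogplusL(\proG)$; for such $f$, the bound $\maxfn{\pois}{\eta}(f) \lesssim \strmaxfn(f)$ from \eqref{eq:Poisson-leq-strong}, together with the $\Leb{2}$-boundedness of $\strmaxfn$, ensures that $\maxfn{\pois}{\eta}(f)$ lies in $\Leb{2}(\proG)$, so the right-hand side of \eqref{GoodLambda} is finite.

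The first summand $|L_\eta(\lambda)^c| = |\{\maxfn{\pois}{\eta}(f) > \lambda\}|$ is at most $C F_\Phi(f/\lambda)$ by Proposition \ref{thm MUs}. For the second, I exploit the pointwise comparison $\maxfn{\pois}{\eta}(f)(\prog) \leq C(\eta) \maxfn{\pois}{1}(f)(\prog)$, a consequence of the smoothness of the Poisson kernel: for $(\proh, \prot) \in \Gamma^\eta(\prog)$, one writes $f \ast \ptt(\proh) = f \ast \ptt(\prog) + [f \ast \ptt(\proh) - f \ast \ptt(\prog)]$; the first summand is at most $\maxfn{\pois}{1}(f)(\prog)$, and the telescoping difference, after decomposing via $\ptt = \ptone \otimes \pttwo$ and applying the H\"older smoothness of each $\pois^{[i]}$ from Lemma \ref{lem:heat-pois-estim}, is controlled by $C \eta^\epsilon$ times a Poisson-like convolution of $|f|$ at $\prog$ and scale $\prot$, itself dominated by $\maxfn{\pois}{1}(f)(\prog)$. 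Consequently $\maxfn{\pois}{\eta}(f) \leq C(\eta) \lambda$ on $L_1(\lambda)$, and the layer cake formula truncates the $s$-integration at $s \leq C(\eta)\lambda$.

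Within this range, Proposition \ref{thm MUs} yields $|\{\maxfn{\pois}{\eta}(f) > s\}| \lesssim F_\Phi(f/s)$, and a direct computation gives
\[
\frac{1}{\lambda^2} \int_{L_1(\lambda)} \maxfn{\pois}{\eta}^2(f) \wrt \prog
\lesssim \frac{2}{\lambda^2} \int_0^{C\lambda} s F_\Phi(f/s) \wrt s
= \frac{2}{\lambda^2} \int_\proG |f(\prog)| \int_0^{C\lambda} \log(\expe + |f(\prog)|/s) \wrt s \wrt \prog,
\]
and this last expression is $\lesssim F_\Phi(f/\lambda)$ since a case analysis on $|f(\prog)| \leq \lambda$ versus $|f(\prog)| > \lambda$ shows that the inner $s$-integral is at most $C\lambda \log(\expe + |f(\prog)|/\lambda)$. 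Combining both contributions completes the proof. The main obstacle is justifying the pointwise comparison $\maxfn{\pois}{\eta}(f) \lesssim \maxfn{\pois}{1}(f)$ in the product setting: one must arrange the smoothness decomposition in each factor so that the resulting Poisson-like convolution is dominated by $\maxfn{\pois}{1}(f)$ rather than by the strictly larger $\strmaxfn(f)$, since only the former is controlled by $\lambda$ on the set $L_1(\lambda)$.
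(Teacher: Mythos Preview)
Your strategy --- the good-$\lambda$ inequality plus Proposition~\ref{thm MUs} plus a layer-cake computation --- is exactly the paper's. The integral estimate you carry out at the end is correct and amounts to the same calculation the paper performs via \eqref{eq:nearly-linear}.

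The gap is the step you yourself flag as ``the main obstacle'': the pointwise aperture comparison $\maxfn{\pois}{\eta}(f)\le C(\eta)\,\maxfn{\pois}{1}(f)$. Your smoothness argument does not establish it. Writing $f\ast\ptt(\proh)-f\ast\ptt(\prog)$ as an integral of $f$ against a difference of translated kernels and invoking the H\"older regularity of each $\pois^{[i]}$ produces a bound of the form $C\eta^\epsilon\,(|f|\ast K_\prot)(\prog)$, where $K_\prot$ is a Poisson-type product kernel. A convolution of $|f|$ with such a kernel is controlled by $\strmaxfn(f)(\prog)$ via \eqref{eq:Poisson-leq-strong}, but \emph{not} by $\maxfn{\pois}{1}(f)(\prog)$: the latter is built from $f\ast\ptt$, which may exhibit cancellation, whereas $|f|\ast K_\prot$ cannot. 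Once the absolute value has moved inside the integral the link to $\maxfn{\pois}{1}(f)$ is broken, and no rearrangement of the smoothness decomposition in the two factors will restore it.

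The detour is in any case unnecessary. The good-$\lambda$ inequality, in the form \eqref{CFLY} used in the paper's proof, has the integral taken over $L_\eta(\lambda)=\{\maxfn{\pois}{\eta}(f)\le\lambda\}$ rather than over $L_1(\lambda)$ as in the introductory display \eqref{GoodLambda}. On $L_\eta(\lambda)$ the integrand $\maxfn{\pois}{\eta}^2(f)$ is at most $\lambda^2$ \emph{by definition}, so the layer-cake formula truncates automatically:
\[
\frac{1}{\lambda^2}\int_{L_\eta(\lambda)}\maxfn{\pois}{\eta}^2(f)(\prog)\wrt\prog
=\frac{1}{\lambda^2}\int_0^\lambda 2\mu\,\bigl|L_\eta(\lambda)\cap L_\eta(\mu)^c\bigr|\wrt\mu
\le\frac{2}{\lambda^2}\int_0^\lambda \mu\,F_\Phi(f/\mu)\wrt\mu,
\]
and from here your (correct) endgame applies verbatim. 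With this single change --- replacing $L_1(\lambda)$ by $L_\eta(\lambda)$ in the domain of integration --- your argument coincides with the paper's and the aperture comparison is never needed.
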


\begin{proof}
A recent result of Fan, Yan and the first and third authors \cite{CFLY} shows that for $f$ such that $\norm{ \maxfn{\pois}{\eta } f }_{\Leb1(\proG) }<\infty$,
\begin{align*}
\norm{  \sqfn{\fullnabla \pois}{1} f }_{\Leb1(\proG) }
\lesssim \norm{ \maxfn{\pois}{\eta } f }_{\Leb1(\proG) }.
\end{align*}
More precisely, define the sublevel set $L_{\eta }(\lambda) :=\lset \prog\in {\proG}\colon \maxfn{\pois}{\eta }(f)(\prog) \leq \lambda  \rset$.
Then it is shown that, when $\eta $ is sufficiently large,
\begin{align}\label{CFLY}
\biglabs \biglset \prog \in \proG\colon \sqfn{\fullnabla \pois}{1}(f) (\prog) >\lambda \bigrset \bigrabs
&\lesssim\biglabs L_{\eta }(\lambda)^c  \bigrabs
+
\frac{1}{\lambda^2} \int_{L_{\eta }(\lambda )} \maxfn{\pois}{\eta }^2 (f)(\prog) \wrt \prog
\end{align}
for all $\lambda\in \Rplus$.
Now $\biglabs L_{\eta }(\lambda)^c  \bigrabs \lesssim F_\Phi(f/\lambda)$ by Proposition \ref{thm MUs}, so the layer-cake formula and \eqref{eq:nearly-linear} show that
\begin{align*}
\biglabs \biglset \prog \in \proG\colon \sqfn{\fullnabla \pois}{1}(f) (\prog) >\lambda \bigrset \bigrabs
&\lesssim\biglabs L_{\eta }(\lambda)^c  \bigrabs
+
\frac{1}{\lambda^2} \int_{L_{\eta }(\lambda )} \maxfn{\pois}{\eta }^2 (f)(\prog) \wrt \prog \\
&= \biglabs L_{\eta }(\lambda)^c  \bigrabs
+
\frac{1}{\lambda^2} \int_0^\lambda 2\mu \Big|L_{\eta }(\mu)^c\Big|\wrt \mu\\
&\lesssim F_\Phi(f/\lambda)
+ 
\frac{1}{\lambda^2} \int_0^\lambda F_\Phi(f/\mu) \mu \wrt \mu\\
&\lesssim F_\Phi(f/\lambda)
+ 
\frac{1}{\lambda} \int_0^\lambda \log(\expe + \lambda/\mu) F_\Phi(f/\lambda) \wrt \mu\\
&\lesssim F_\Phi(f/\lambda) .
\end{align*}
This implies that the right-hand side of \eqref{CFLY} is finite. 
By repeating the argument of \cite{CFLY}, we see that \eqref{CFLY} also holds for $f\in \LlogplusL (\proG)$, and we conclude that
\begin{align*}
\big| \big\{\prog \in \proG\colon | \sqfn{\fullnabla \pois}{1} (f) (\prog)|>\lambda \big\}\big|
&\lesssim\big|  L_\eta (\lambda)^c\big|
+
\frac{1}{\lambda ^2}\int_{L_{\eta }(\lambda )} \maxfn{\pois}{\eta }^2 (f)(\prog)\wrt \prog\\
&\lesssim F_\Phi(f/\lambda)
\qquad\forall \lambda \in \Rplus
\end{align*}
for all $f \in \LlogplusL(\proG)$, as required. 
\end{proof}

Recall that $\qtt$ denotes $t_1\partial_{t_1} \pois_{t_1}\one \otimes t_2\partial_{t_2} \pois\two_{t_2}$ and
\[ 
\sqfn{\conjpois}{1}(f)(\prog)
:= \bigglpar \int\!\!\!\dashint_{\Gamma(\prog)}
\left|  f \ast \qtt (\proh) \right|^{2}
\wrt\proh \wrtprotonprot \biggrpar^{1/2}. 
\]

\begin{corollary}\label{cor:LlogL-to-weak-L1}
The area function $\sqfn{\conjpois}{1} (f)$ satisfies the estimate:
\begin{equation*}
\biglabs\biglset \prog\in \proG\colon \sqfn{\conjpois}{1} (f)(\prog)>\lambda   \bigrset \bigrabs
\lesssim F_\Phi(f/\lambda)
\qquad\forall \lambda \in \Rplus
\end{equation*}
for all $f \in \LlogplusL(\proG)$.
\end{corollary}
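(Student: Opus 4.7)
The plan is to deduce the corollary directly from Theorem \ref{thm Area} by establishing the pointwise domination $\sqfn{\conjpois}{1}(f)(\prog) \leq \sqfn{\fullnabla\pois}{1}(f)(\prog)$ for every $\prog \in \proG$, after which the hyperweak estimate transfers immediately.

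To verify this domination, I would first unpack the normalisation conventions. Since $\qt = t\partial_t\pois_t$ by definition, evaluating at $t_1 = 1$ gives $\partial_{t_1}\pois\one_{t_1}(g_1)\big|_{t_1=1} = \qone\one(g_1)$, and analogously $\partial_{t_2}\pois\two_{t_2}(g_2)\big|_{t_2=1} = \qone\two(g_2)$. Thus the `time' component of the single-factor full gradient $\fullnabla\one\poisoneone$ is $\qone\one$, and similarly on $G_2$, so one of the Euclidean components of the vector-valued kernel $\fullnabla\pois = \fullnabla\one\poisoneone \otimes \fullnabla\two\poisonetwo$ is the tensor product $\qone\one \otimes \qone\two$, whose normalised dilate at the scale $\prot = (t_1,t_2)$ is precisely $\qtone \otimes \qttwo = \qtt$. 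Consequently
\[
|f\ast \qtt(\proh)|^{2} \leq |f\ast (\fullnabla\pois)_\prot(\proh)|^{2}
\qquad\forall (\proh,\prot) \in \proG \times \proT,
\]
where the right hand side is interpreted as the sum of squared moduli of all the vector components of the convolution.

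Integrating this pointwise inequality over $\Gamma(\prog)$ against $\wrt\proh\wrtprotonprot$ with the average in $\proh$ (that is, applying $\int\!\!\dashint_{\Gamma(\prog)}$ to both sides) and taking square roots yields the pointwise domination of area functions claimed above. The conclusion then follows at once from Theorem \ref{thm Area}. The only item calling for any care is the bookkeeping that certifies $\qtt$ as a literal component of $(\fullnabla\pois)_\prot$ under the chosen normalisation of dilates; beyond this, there is no genuine obstacle.
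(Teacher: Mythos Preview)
Your proposal is correct and matches the paper's own one-line proof: the paper simply observes that $f\ast\qtt$ is one of the components of the tensor $f\ast(\fullnabla\pois)_{\prot}$, which is exactly the pointwise domination you spell out. Your additional bookkeeping verifying that $\qtt$ is the normalised dilate of the time-time component of $\fullnabla\one\poisoneone\otimes\fullnabla\two\poisonetwo$ is accurate and just makes explicit what the paper leaves implicit.
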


\begin{proof}
This holds as $f * \qtt$ is one of the components of the tensor  $f * (\slashed\nabla \pois)_{\prot} $.
\end{proof}

\subsection{The atomic decomposition for $\LlogplusL $}\label{Sec4.2}

Recall that the $\conjpois^{[i]}_1$ satisfy the standard smoothness, decay and cancellation conditions \eqref{eq:molecule} and \eqref{cancel psi}, and by \cite{GM} there exist compactly supported smooth functions $\phi^{[i]}$ on $G_i$ with integral $0$ such that
\begin{align*}
f
&= \int_\proT f \ast \qtt \ast \phitt  \wrtprotonprot  
\end{align*}
for all $f\in \Leb 2(\proG)$, where $\phitt=\phi_{t_1}\otimes\phi_{t_2}$.
This is the Calderón reproducing formula.
We suppose that $\supp(\phi^{[i]}) \subseteq  B_i(o,1)$, by rescaling if necessary.

\begin{theorem}\label{lemma L log L atom}
Let  $f\in \Leb 2(\proG) \cap \LlogplusL (\proG)$. 
Then we may write
\[
f=\sum_{k \in \Z} a_k,
\]
where the sum converges unconditionally in $L^2(\proG)$ and the \emph{atoms} $a_k$ satisfy: 
\begin{enumerate}
  \item $a_k$ vanishes outside a subset $U^{\dagger}_k$ of $\proG$ such that $|U^{\dagger}_k|\lesssim F_\Phi(2^{-k}{f})$;
  \item $\|a_k\|_{\Leb 2(\proG)}^2 \lesssim F_\Phi(2^k f)$;
  \item each $a_k$ can be further decomposed: $a_k=\sum_{R\in \maxrect(U^{*}_k)}a_{k,R} $, where  the sum converges unconditionally in $\Leb2(\proG)$, and
  \begin{enumerate}
    \item $\supp a_{k,R} \subseteq \beta R$, for a suitable $\beta \in \Rplus$;
    \item $\int_{G_1}a_{k,R}(g_1,g_2) \wrt g_1=0$ for all $g_2 \in G_2$;
    \item $\int_{G_2}a_{k,R}(g_1,g_2) \wrt g_2=0$ for all $g_1 \in G_1$;
    \item $\sum_{R\in \maxrect(U^*_k)}  \norm{a_{k,R}}^2_{\Leb 2(\proG)}\lesssim 2^{2k}F_\Phi(2^{-k} f)$.
  \end{enumerate}
\end{enumerate}
\end{theorem}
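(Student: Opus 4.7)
The plan is to combine the Calderón reproducing formula
\[
f = \int_{\proT} f \ast \qtt \ast \phitt \wrtprotonprot
\]
with a stratification of the parameter space $\proG \times \proT$ by the level sets of the area function $\sqfn{\conjpois}{1}(f)$, following the Chang--R.~Fefferman template adapted to the product stratified setting. First I would define $\Omega_k := \{ \prog \in \proG : \sqfn{\conjpois}{1}(f)(\prog) > 2^k \}$ for $k \in \Z$; Corollary~\ref{cor:LlogL-to-weak-L1} gives $|\Omega_k| \lesssim F_\Phi(2^{-k}f)$. Then I would enlarge twice: set $U^{*}_k := \{\prog : \strmaxfn \chi_{\Omega_k}(\prog) > 1/2\}$ and $U^{\dagger}_k := \bigcup_{R \in \maxrect(U^{*}_k)} \beta R$. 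Theorem~\ref{thm Ms} and Lemma~\ref{lem:enlargement} yield $|U^{*}_k|, |U^{\dagger}_k| \lesssim F_\Phi(2^{-k}f)$. The geometric key is that $P(\proh,\prot) \not\subseteq U^{*}_{k+1}$ forces $|P(\proh,\prot) \cap \Omega_{k+1}^c| \geq |P(\proh,\prot)|/2$. Setting $B_k := \{(\proh,\prot) : P(\proh,\prot) \subseteq U^{*}_k\} \setminus \{(\proh,\prot) : P(\proh,\prot) \subseteq U^{*}_{k+1}\}$, I would put
\[
a_k(\prog) := \iint_{B_k} f \ast \qtt(\proh)\, \phitt(\proh^{-1}\prog) \wrt\proh \wrtprotonprot,
\]
so that $f = \sum_k a_k$ in $L^2$ because the $B_k$ partition $\proG \times \proT$ modulo null sets.

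Next I would verify properties (1) and (2). For (1), the support condition $\supp \phi^{[i]} \subseteq B_i(o,1)$ forces $\prog \in P(\proh,\prot) \subseteq U^{*}_k$ whenever $\phitt(\proh^{-1}\prog) \neq 0$ with $(\proh,\prot) \in B_k$, so $\supp a_k \subseteq U^{*}_k \subseteq U^{\dagger}_k$. For (2), I would pair against $g \in \Leb 2(\proG)$ of norm $1$:
\[
|\lip a_k, g\rip| \leq \biglpar \iint_{B_k} |f \ast \qtt(\proh)|^2 \wrt\proh \wrtprotonprot \bigrpar^{1/2} \biglpar \iint |g \ast \phitt^{*}(\proh)|^2 \wrt\proh \wrtprotonprot \bigrpar^{1/2}.
\]
The second factor is $\lesssim 1$ by the Littlewood--Paley $\Leb2$-bound on the square function built from $\phi$. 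For the first, the Fubini identity
\[
\int_{\Omega_{k+1}^c} \sqfn{\conjpois}{1}(f)^2 \wrt\prog = \iint |f \ast \qtt(\proh)|^2 \, \frac{|\Omega_{k+1}^c \cap P(\proh,\prot)|}{|P(\proh,\prot)|} \wrt\proh \wrtprotonprot
\]
together with the geometric property of $B_k$ gives $\iint_{B_k} |f \ast \qtt|^2 \lesssim \int_{U^{*}_k \cap \Omega_{k+1}^c} \sqfn{\conjpois}{1}(f)^2 \leq 4^{k+1}|U^{*}_k| \lesssim 2^{2k} F_\Phi(2^{-k}f)$, whence $\|a_k\|_{\Leb 2}^2 \lesssim 2^{2k} F_\Phi(2^{-k}f) \lesssim F_\Phi(2^k f)$ (using $F_\Phi(2^{-k}f) \leq 2^{-k}F_\Phi(f)$).

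For the sub-atomic decomposition I would partition $B_k = \bigsqcup_{R \in \maxrect(U^{*}_k)} B_{k,R}$ by grouping $(\proh,\prot) \in B_k$ according to which maximal rectangle of $U^{*}_k$ contains $P(\proh,\prot)$, and set $a_{k,R}(\prog) := \iint_{B_{k,R}} f \ast \qtt(\proh)\, \phitt(\proh^{-1}\prog) \wrt\proh \wrtprotonprot$. Property~(a) holds because $P(\proh,\prot) \subseteq R$ and $\supp \phitt \subseteq P(\proo,\prot)$ together place $\supp a_{k,R} \subseteq \beta R$ with $\beta$ depending only on the Hyt\"onen--Kairema constants. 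Properties~(b) and (c) follow from $\int_{G_i} \phi^{[i]} = 0$ and the left-invariance of Haar measure on $G_i$, which makes the $g_i$-integral of $\phitt(\proh^{-1}\cdot)$ vanish pointwise. For~(d), disjointness of the $B_{k,R}$ and the same duality bound as in step~(2) give
\[
\sum_{R \in \maxrect(U^{*}_k)} \|a_{k,R}\|_{\Leb 2}^2 \lesssim \sum_R \iint_{B_{k,R}} |f \ast \qtt|^2 = \iint_{B_k} |f \ast \qtt|^2 \lesssim 2^{2k} F_\Phi(2^{-k}f).
\]

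The hardest part, and the place where the covering theory of Section~3 is indispensable, is the joint calibration of the enlargement $U^{*}_k$: it must be large enough that any $P(\proh,\prot) \not\subseteq U^{*}_{k+1}$ has a definite fraction of its mass outside $\Omega_{k+1}$ (so that the Fubini step produces the $4^k|U^{*}_k|$ bound needed for both (2) and (d)), yet small enough that $|U^{*}_k| \lesssim |\Omega_k|$ still holds --- which is exactly the hyperweak $\strmaxfn$ bound of Theorem~\ref{thm Ms}. A secondary subtlety is that a pseudo\-dyadic system need not contain a given $P(\proh,\prot)$; to ensure that the $a_{k,R}$ really live in dilates of pseudo\-dyadic rectangles I would run the construction simultaneously over the finitely many adjacent systems from Theorem~\ref{thm:existence2} and sum over that index at the end.
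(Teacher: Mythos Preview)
Your overall strategy is the same as the paper's---a Chang--Fefferman style decomposition driven by the level sets of $\sqfn{\conjpois}{1}(f)$ and the reproducing formula---and your treatment of items (1), (2) and the $\Leb2$ estimate in (d) matches the paper's Fubini argument almost line for line. The paper, however, does not work with the continuous tent region $B_k = \{(\proh,\prot): P(\proh,\prot)\subseteq U^*_k\}\setminus\{(\proh,\prot): P(\proh,\prot)\subseteq U^*_{k+1}\}$. Instead it first discretises $\proG\times\proT$ into tents $R_+$ over pseudo\-dyadic rectangles $R$, and then classifies each $R$ by the ratio $|R\cap U_k|/|R|$, putting $R\in\bases_k$ when $|R\cap U_k|\geq|R|/2>|R\cap U_{k+1}|$.

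This difference is harmless for (1) and (2) but is exactly where your sub-atomic step (3) breaks down. You propose to partition $B_k$ by assigning each $(\proh,\prot)$ to a maximal rectangle $R\in\maxrect(U^*_k)$ \emph{containing $P(\proh,\prot)$}. But a product of metric balls $P(\proh,\prot)$ need not sit inside any single pseudo\-dyadic rectangle, even when $P(\proh,\prot)\subseteq U^*_k$: already on $\R$ the interval $(-1/2,1/2)$ is contained in no standard dyadic interval lying in $(-3/2,3/2)$. Your proposed remedy via adjacent systems only produces a rectangle $Q\supseteq P(\proh,\prot)$ of comparable size; it does not guarantee $Q\subseteq U^*_k$, so you still cannot assign $(\proh,\prot)$ to an element of $\maxrect(U^*_k)$, and property~(a) is left unproven. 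The paper's discretisation avoids this entirely: each $R\in\bases_k$ is itself a pseudo\-dyadic rectangle with $|R\cap U_k|\geq|R|/2$, so by Lemma~\ref{lem:compare-R-P} one has $R\subseteq U^*_k$ automatically, and a maximal $\tilde R\in\maxrect(U^*_k)$ containing $R$ exists by Zorn. The support bound $\supp b_{k,R}\subseteq\beta R$ then comes from the explicit tent $R_+$ (which pins down $\prot\approx\blen(R)$) together with $\supp\phi^{[i]}\subseteq B_i(o,1)$. Replacing your continuous $B_k$ by this pseudo\-dyadic tent partition is the missing ingredient.
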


\begin{proof}
From Corollary \ref{cor:LlogL-to-weak-L1},  $\sqfn{\conjpois}{1}$ is hyperweakly bounded. 
Assume that $f\in \Leb 2(\proG) \cap \LlogplusL (\proG)$.
Then $\sqfn{\conjpois}{1} (f)\in \Leb {2}(\proG)$. 
Now for all $k\in\Z$, we set
\begin{align}
U_k
&:= \{ \prog\in\proG\colon \sqfn{\conjpois}{1}(f)(\prog)>2^k \}; 
\label{eq:def-Uk}\\
U^{*}_k
&:= \{\prog\in\proG\colon \strmaxfn (\chi_{U_k})(\prog)>  \alpha  \};
\label{eq:def-Ukstar}\\
U^{\dagger}_k
&:= \bigcup\nolimits_{R \in \maxrect(U^*_k)} \beta R;
\label{eq:def-Uktilde}\\
\bases_k
&:= \{ R \in \drect(\proG) \colon \labs R\cap U_k\rabs \geq \abs{R}/2, \labs R\cap U_{k+1}\rabs < \abs{R}/2 \};
\label{eq:def-Bk}
\end{align}
here $\alpha$ and $\beta$ are positive numbers that will be specified during the proof.
By definition $U_k \supseteq U_{k+1}$, so the sequence $\labs R\cap U_k\rabs / \abs{R}$ is decreasing and the sets $\bases_k$ are pairwise disjoint.
If $\prog \in R \in \bases_k$, then $|R \cap U_k | / \abs{R}> 1/2$ so $\prog \in U^{*}_k$ by Lemma \ref{lem:compare-R-P} provided that $\alpha < C(\proG)/2$; coupled with \eqref{eq:def-Bk}, this shows that
\begin{equation}\label{eq:key-inequality}
\labs R \cap (U^{*}_{k} \setminus U_{k+1}) \rabs 
= \labs R \setminus (R \cap U_{k+1}) \rabs 
\geq \frac{1}{2} \labs R \rabs.
\end{equation}

For each rectangle $R \in \bases_k$, we define the tent $R_+$ over $R$ to be the set
\begin{equation}\label{eq:def-tent}
\Biglset (\prog,\prot) \in \proG \times \proT\colon 
\prog\in R, 4C\len_1(R)  < t_1 \leq 8C\len_1(R), 4C\len_2(R) < t_2 \leq 8C\len_2(R)\Bigrset . 
\end{equation}
This definition implies that, if $R \in \bases_k$ and $(\proh,\prot) \in R_+$, then 
\begin{equation}\label{eq:R and R}
R 
\subseteq B_1(h_1,4C\len_1(R)) \times B_2(h_2,4C\len_2(R))  
\subseteq B_1(h_1,t_1) \times B_2(h_2,t_2)
= P(\proh, \prot)  .
\end{equation}
For future use, we note that, for any measurable set $V$ in $\proG$,
\begin{equation}\label{eq:conv-eqn}
\begin{aligned}
\chi_V * \chi_{P(\proo,\prot)}(\proh) 
&= \int_{\proG} \chi_V(\prog) \chi_{P(\proo,\prot)} (\prog^{-1}\proh) \wrt\prog 
= \int_{\proG} \chi_V(\prog) \chi_{P(\proh,\prot)} (\prog) \wrt\prog \\
&= \labs P(\proh, \prot) \cap V \rabs.
\end{aligned}
\end{equation}

Since $f\in \Leb 2(\proG)$, the reproducing formula implies that
\begin{align*}
f(\prog)
&= \int_\proT
f \ast \qtt \ast \phitt(\prog)  \wrtprotonprot  \\
&= \int_\proT \int_\proG
f \ast  \qtt(\proh) \phitt(\proh^{-1}\prog) \wrt \proh  \wrtprotonprot \\
&= \sum_{k \in \Z}\sum_{R\in \bases_k}  \iint_{R_+}
f \ast  \qtt(\proh) \phitt(\proh^{-1}\prog) \wrt \proh  \wrtprotonprot \\
&= \sum_{k \in \Z} a_k(\prog),
\end{align*}
say.
Next, for all $R\in \bases_k$, choose $\tilde{R}\in \maxrect(U^{*}_k)$ such that $R\subseteq \tilde{R}$. 
Then for all $S\in \maxrect(U^{*}_k)$, set $a_{k,S}=\sum_{R \in \bases_k\colon \tilde{R}=S } b_{k,R}$,
where
\[
b_{k,R}(\prog)
= \iint_{R_+} f \ast  \qtt(\proh) \phitt(\proh^{-1}\prog) \wrt \proh \wrtprotonprot .
\]
By construction,
\[
a_k= \sum_{S\in \maxrect(U^{*}_k)} a_{k,S}.
\]

Now 
\[
 \supp(\phitt) \subseteq B(o,t_1) \times B(o,t_2)\subseteq B_1(o,8C\len_1(R)) \times B_2(o,8C\len_2(R)) ,
\] 
and hence 
\begin{align*}
\supp( b_{k,R})  
&\subseteq B_1(z_1(R),10C\len_1(R)) \times B_2(z_2(R),10C\len_2(R))  \\
&= B_1(z_1(R),\beta c\len_1(R)/3) \times B_2(z_2(R),\beta c\len_2(R)/3) \\
&\subseteq \beta R,
  \end{align*}
where $\beta = 30 C/c$.
Thus $a_k$ vanishes on $(U^{\dagger}_k)^c$. 
By Lemmas \ref{lem:enlargement} and \ref{lem:compare-R-P} and Theorem \ref{lemma L log L atom}, 
\begin{equation}\label{eq:Uktilde-estimate}
|U^{\dagger}_k|\lesssim|U^{*}_k|\lesssim|U_k|\lesssim F_\Phi(2^{-k}f). 
\end{equation}

Next, we claim that for all $k\in \Z$,
\begin{align}\label{claim LlogplusL atom a k}
\|a_k\|_{\Leb 2(\proG)}^2\lesssim 2^{2k} F_\Phi(2^{-k} f).
\end{align}
To see this, take $h \in \Leb 2(\proG)$ such that $\|h\|_{\Leb 2(\proG)}=1$; then, writing $\check\phi$ for the reflected version of $\phi$, that is, $\check\phi(\prog) = \phi(\prog^{-1})$, we see that
\begin{align*}
\big|\langle a_k,h \rangle\big|
&= \bigg|\sum_{R\in \bases_k}
\iint_{R_+} f \ast \qtt(\prog) h \ast \check\phitt(\prog) \wrt \prog \wrtprotonprot\bigg|\\[5pt]
&\leq   
    \bigglpar \sum_{R\in \bases_k} \iint_{R_+} |f \ast \qtt(\prog)|^2 \wrt \prog\wrtprotonprot \biggrpar^{1/2} 
    \bigglpar \sum_{R\in \bases_k} \iint_{R_+} |h \ast \check\phitt(\prog)|^2 \wrt \prog \wrtprotonprot \biggrpar^{1/2} \\[5pt]
&\lesssim  
    \bigglpar \sum_{R\in \bases_k} \iint_{R_+} |f \ast \qtt(\prog)|^2 \wrt \prog \wrtprotonprot \biggrpar^{1/2}.
\end{align*}
By definition, \eqref{eq:key-inequality}, \eqref{eq:def-tent}, \eqref{eq:R and R}, a convolution identity and \eqref{eq:Uktilde-estimate},
\begin{align*}
&\sum_{R\in \bases_k} \iint_{R_+}  |f \ast \qtt(\proh)|^2 
\wrt \proh\wrtprotonprot \\
&\qquad\leq 2\sum_{R\in \bases_k} \iint_{R_+} \frac{\biglabs R \cap (U^{*}_{k} \setminus U_{k+1}) \bigrabs}{\abs{R}}
\labs f \ast \qtt(\proh)\rabs^2  \wrt \proh\wrtprotonprot \\
&\qquad\lesssim \sum_{R\in \bases_k} \iint_{R_+} 
\frac{\biglabs P(\proh,\prot) \cap (U^{*}_{k} \setminus U_{k+1}) \bigrabs}{|P(\proo,\prot)|} 
|f \ast \qtt(\proh)|^2 \wrt \proh\wrtprotonprot \\
&\qquad= \iint_{\proG \times \proT} 
\frac{\biglabs P(\proh,\prot) \cap (U^{*}_{k} \setminus U_{k+1}) \bigrabs}{|P(\proo,\prot)|}
|f \ast \qtt(\proh)|^2 \wrtprotonprot \wrt\proh\\
&\qquad= \iint_{\proG \times \proT} 
\chi_{U^{*}_{k} \setminus U_{k+1}} * \frac{\chi_{P(\proo,\prot)}}{|P(\proo,\prot)|}(\proh) 
|f \ast \qtt(\proh)|^2 \wrtprotonprot \wrt\proh \\
&\qquad= \int_{\proG}\chi_{U^{*}_{k} \setminus U_{k+1}} (\proh) \int_{\proT} 
\labs f \ast \qtt\rabs^2 *  \frac{\chi_{P(\proo,\prot)}}{|P(\proo,\prot)|}(\proh) 
\wrt \proh \wrtprotonprot \\\
&\qquad=\int_{U^{*}_k\backslash U_{k+1}}
|\sqfn{\conjpois}{1}(f)(\proh)|^2\wrt \proh \\
&\qquad\leq     2^{2k}|U^{*}_k|\\
&\qquad\lesssim 2^{2k}F_\Phi(2^{-k}f).
\end{align*}
This shows that $|\langle a_k,h\rangle|^2 \lesssim 2^{2k}F_\Phi(2^{-k}f)$, whence $\|a_k\|_{\Leb 2(\proG)}^2\lesssim 2^{2k}F_\Phi(2^{-k}f)$,
and \eqref{claim LlogplusL atom a k} holds.

We now claim that 
\begin{align}\label{claim LlogplusL atom a k 2}
\sum_{S\in \maxrect(U^*_k)}  \|a_{k,S}\|^2_{\Leb 2(\proG)}
&\lesssim 2^{2k}F_\Phi(2^{-k}f).
\end{align}
Arguing as above, we see that, if $h\in \Leb 2(\proG)$ and $\|h\|_{\Leb 2(\proG)}=1$, then
\begin{align*}
\big|\langle a_{k,S},h\rangle\big|
&= \bigg|
\sum_{R \in \bases_k(S)} \iint_{R_+}  f \ast  \qtt(\proh)\ h \ast  \check\phitt(\proh)  \wrt \proh \wrtprotonprot\bigg|\\[5pt]
&\leq  \bigglpar \sum_{R \in \bases_k(S)} \iint_{R_+}  |f \ast  \qtt(\proh)|^2  \wrt \proh \wrtprotonprot \biggrpar^{1/2} 
\\&\hspace{4cm}\times 
\bigglpar \sum_{R \in \bases_k(S)}\iint_{R_+} |h \ast \check\phitt(\proh)|^2  \wrt \proh \wrtprotonprot \biggrpar^{1/2}\\[5pt]
&\lesssim \bigglpar \sum_{R \in \bases_k(S)} \iint_{R_+} |f \ast  \qtt(\proh)|^2  \wrt \proh\wrtprotonprot \biggrpar^{1/2},
\end{align*}
where $\bases_k(S) := \{ R \in \bases_k\colon \tilde{R}=S \}$.
Hence
\[ 
\norm{a_{k,R}}_{\Leb 2(\proG)}\leq\bigglpar \sum_{R \in \bases_k(S)}\iint_{R_+} |f \ast  \qtt(\proh)|^2 \wrt\proh \wrtprotonprot \biggrpar^{1/2},
\]
which implies that 
\[ 
\sum_{R\in \maxrect(U^*_k)}  \norm{a_{k,R}}^2_{\Leb 2(\proG)}
\leq  \sum_{R\in \bases_k} \iint_{R_+} |f \ast  \qtt(\proh)|^2  \wrt \proh \wrtprotonprot.
\]
Then, much as argued to prove \eqref{claim LlogplusL atom a k}, we deduce \eqref{claim LlogplusL atom a k 2}.
The unconditional convergence of the sum follows as in \cite{CCLLO}.
\end{proof}

\section{Proof of Theorem \ref{thm main}}

In this section, we use the atomic decomposition to prove Theorem \ref{thm main}.
Recall that, if  $f\in \Leb 2(\proG) \cap \LlogplusL (\proG)$, then we may write
\[
f=\sum_{k \in \Z} a_k
\qquad\text{and}\qquad
a_k=\sum_{R\in \maxrect(U^*_k)}a_{k,R},
\]
where $U^*_k$ is as in Theorem \ref{lemma L log L atom}, and $a_k$ and $a_{k.R}$ satisfy support and cancellation conditions.
In Theorem \ref{lemma L log L atom}, we defined 
\[
U_k := \{ \prog\in\proG\colon \sqfn{\conjpois}{1}(f)(\prog)>2^k \}; 
\]
and
\begin{equation*}
U^{*}_k := \{\prog\in\proG\colon \strmaxfn (\chi_{U_k})(\prog)>\alpha \},
\end{equation*}
where $\alpha < C(\proG)/2$; now we also define 
\begin{equation}\label{eq:def-Ukstarstar}
U^{**}_k := \{\prog\in\proG\colon \strmaxfn (\chi_{U^{*}_k})(\prog)> \alpha \}.
\end{equation}
and
\begin{equation}\label{eq:def-Ukstarstarstar}
U^{***}_k := \{\prog\in\proG\colon \strmaxfn (\chi_{U^{**}_k})(\prog)> \alpha \}.
\end{equation}
\subsection{Proof of Theorem \ref{thm main} for area functions $\sqfn{\psi}{\eta }(f)$}\label{Sec4.3}

Recall from Section \ref{Sec4.2} that $\Leb 2(\proG) \cap \LlogplusL (\proG)$ is dense in $ \LlogplusL (\proG)$.
Now we take a general $\psi$ satisfying the decay, smoothness and cancellation conditions \eqref{eq:molecule} and \eqref{cancel psi}, and prove that
\begin{equation}\label{eq e40}
\big|\big\{ \prog\in \proG\colon \sqfn{\psi}{\eta }(f)(\prog)>\lambda   \big\}\big|
\lesssim F_\Phi(f/\lambda)
\qquad\forall \lambda \in \Rplus
\end{equation}
for all $f\in \LlogplusL (\proG)$.
By sublinearity and density, it suffices to prove that 
\begin{equation}\label{eq e40 homo}
\big|\big\{ \prog\in \proG\colon \sqfn{\psi}{\eta }(f)(\prog)>1  \big\}\big|
\lesssim F_\Phi(f)
\end{equation}
for all $f\in \Leb 2(\proG) \cap \LlogplusL (\proG)$.
We may and shall suppose that $\eta > 1$.

From Theorem \ref{lemma L log L atom}, we may write
\[ 
f=\sum_ka_k ,
\]
where the $a_k$ satisfy the conditions of the lemma.

Observe that
\begin{align*}
\Biglnorm \sum_{k \leq 0}a_k\Bigrnorm_{\Leb 2(\proG)}
&\le \sum_{k \leq 0} \norm{a_k}_{\Leb 2(\proG)} 
\lesssim  \sum_{k \leq 0} 2^{k}F_\Phi(2^{-k}f)^{1/2}
\lesssim F_\Phi(f)^{1/2}.
\end{align*}
Thus, by the $\Leb2(\proG)$ boundedness of $\sqfn{\psi}{\eta }$,
\begin{equation}\label{eq e41}
\bigg|\bigg\{\prog\in \proG\colon \sqfn{\psi}{\eta }\bigglpar\sum_{k \leq 0}a_k\biggrpar(\prog)>1\bigg\}\bigg|
\lesssim \Biglnorm \sum_{k \leq 0}a_k\Bigrnorm_{\Leb 2(\proG)}^2
\lesssim  F_\Phi(f).
\end{equation}

To handle $\sqfn{\psi}{\eta }(\sum_{k \geq 1} a_k)$, we use the fine structure of the atoms $a_k$: each $a_k$ is supported in $U_k^\dagger$, where $|U_k^\dagger|\lesssim 2^{-k}F_\Phi(f)$, and $a_k=\sum_{R\in \maxrect(U^*_k)} a_{k,R}$.

For all $R = {Q_1} \times {Q_2} \in \maxrect(U^*_k)$, let ${\tilde{Q}_1}$ be the biggest pseudo\-dyadic cube containing  ${Q_1}$ such that ${\tilde{Q}_1}\times {Q_2}\subset U^{**}_k$, where $U^{**}_k$ is defined in \eqref{eq:def-Ukstarstar}. 
Next, let ${\tilde{Q}_2}$ be the biggest pseudo\-dyadic cube containing ${Q_2}$ such that ${\tilde{Q}_1}\times {\tilde{Q}_2} \subseteq U^{***}_k$, where $U^{***}_k$ is defined in \eqref{eq:def-Ukstarstarstar}. 
Finally, let $R^{\dagger}$ be $100\beta_k({\tilde{Q}_1} \times {\tilde{Q}_2})$, where $\beta_k = 2^{k/ (2\homodim_1+2\homodim_2)}$.
By Lemmas \ref{lem:enlargement} and \ref{lem:compare-R-P} and Theorem \ref{lemma L log L atom},
\begin{equation}\label{eq:Rdagger-est}
\begin{aligned}
\bigglabs \bigcup_{R \in \maxrect(U^*_k)} R^{\dagger}\biggrabs
&\lesssim2^{k/2} |U_k^{\dagger}|
\lesssim2^{k/2} |U_k^{***}|
\lesssim2^{k/2} |U_k^{**}|  \\
&\lesssim2^{k/2} |U_k^*|
\lesssim2^{k/2} |U_k|
\lesssim 2^{k/2} F_\Phi(2^{-k} f).
\end{aligned}
\end{equation}

We claim that there exists $\delta \in \Rplus$ such that 
\begin{equation}\label{e 42}
\sum_{R\in \maxrect(U^*_k)}
\int_{(R^{\dagger})^c} |\sqfn{\psi}{\eta }(a_{k,R})(\prog)|\wrt\prog
\lesssim 2^{-\delta k} F_\Phi(f)
\qquad\forall k \in \N.
\end{equation}
Assume \eqref{e 42} for the moment, and let $E^\dagger :=\bigcup_{k \geq 1}\bigcup_{R \in \maxrect(U^*_k)} R^{\dagger}$.
Then on the one hand,
\begin{align*}
|E^\dagger| 
= \Big|\bigcup_{k \geq 1}\bigcup_{R \in \maxrect(U^*_k)}
R^{\dagger}\Big|
&\lesssim\sum_{k \geq 1} 2^{k/2} F_\Phi(2^{-k} f)
\lesssim F_\Phi(f)
\end{align*}
by \eqref{eq:Rdagger-est}.
On the other hand, summing \eqref{e 42} over positive $k$ shows that
\begin{align*}
\int_{(E^\dagger)^c}\Big|\sqfn{\psi}{\eta }\Big( \sum_{k \geq 1}a_k\Big)(\prog)\Big|\wrt \prog 
\lesssim F_\Phi(f).
\end{align*}
By Chebychev's inequality,
\begin{align*}
\Big|\Biglset \prog \in \proG \colon\sqfn{\psi}{\eta }\Big( \sum_{k \geq 1} a_k(\prog) \Big)>1
\Bigrset \Big|
&\leq |E^\dagger|+ \int_{(E^\dagger)^c}\Big|\sqfn{\psi}{\eta }\Biglpar 
\sum_{k \geq 1}a_k\Bigrpar (\prog)\Big|\wrt \prog
\lesssim F_\Phi(f).
\end{align*}
Together with \eqref{eq e41}, this implies \eqref{eq e40}. 

It remains to prove \eqref{e 42}.
Now
\begin{align*}
\int_{(R^{\dagger})^c} |\sqfn{\psi}{\eta }(a_{k,R})(g_1, g_2)|\wrt g_2 \wrt g_1
&\leq \int_{(100\beta_k{\tilde{Q}_1})^c}\int_{100{Q_2}}|\sqfn{\psi}{\eta }(a_{k,R})(g_1, g_2)|\wrt g_2 \wrt g_1 \\ 
&\qquad  +\int_{(100\beta_k{\tilde{Q}_1})^c}\int_{(100{Q_2})^c}|\sqfn{\psi}{\eta }(a_{k,R})(g_1, g_2)|\wrt g_2 \wrt g_1 \\
&\qquad  +\int_{(100\beta_k{\tilde{Q}_2})^c}\int_{100{Q_1}} |\sqfn{\psi}{\eta }(a_{k,R})(g_1, g_2)| \wrt g_1 \wrt g_2  \\
&\qquad  +\int_{(100\beta_k{\tilde{Q}_2})^c}\int_{(100{Q_1})^c} |\sqfn{\psi}{\eta }(a_{k,R})(g_1, g_2)| \wrt g_1 \wrt g_2 \\
&= \term{I}_1(R) + \term{I}_2(R) + \term{I}_3(R) + \term{I}_4(R),
\end{align*}
say, where $\beta_k=2^{k/(2\homodim_1+2\homodim_2)} \eta $.
It suffices to control $\term{I}_1(R)$ and $\term{I}_2(R)$, as the other two terms are similar.

By Hölder's inequality and the $ \Leb 2(G_2)$-boundedness of  $\sqfn{\psi^{[2]}}{\eta }$,
\begin{align*}
\term{I}_1
&\lesssim |{Q_2}|^{1/2} \int_{(100\beta_k {\tilde{Q}_1})^c} \bigglpar\int_{100{Q_2}} \biglpar \sqfn{\psi^{[1]}}{\eta }(a_{k,R})(g_1,g_2)\bigrpar ^2 \wrt g_2\biggrpar^{1/2} \wrt g_1.
\end{align*}

We use the cancellation and support restrictions on $a_{k,R}$ in the first variable, \eqref{eq:molecule}, homogeneity, the geometry ($h_1, z({Q_1}) \in {Q_1}$ and $g_1 \in (100\beta_k {\tilde{Q}_1})^c$) and Hölder's inequality to deduce that
\begin{align*}
&|a_{k,R} \ast_1 \psi_t^{[1]}(g_1,g_2)| \\
&\qquad=\bigglabs\int_{G_1} a_{k,R}(h_1,g_2)t^{-\homodim_1}
\bigl(\psi^{[1]}(\delta_{1/t}(h_1^{-1}g_1))-\psi^{[1]}(\delta_{1/t}(z({Q_1})^{-1}g_1))\bigr) \wrt{h}_1\biggrabs\\
&\qquad\lesssim \int_{G_1} \labs a_{k,R}(h_1,g_2)\rabs  
\frac{t^{-\homodim_1} \biglpar \rho(\delta_{1/t}(z({Q_1})^{-1}g_1)\delta_{1/t}(h_1^{-1}g_1)^{-1}) \bigrpar^{\epsilon}}
{\biglpar 1 + \rho(\delta_{1/t}(h_1^{-1}g_1)) + \rho(\delta_{1/t}(z({Q_1})^{-1}g_1))\bigrpar^{\homodim_1+2\epsilon}} \wrt{h}_1 \\
&\qquad= \int_{G_1} \labs a_{k,R}(h_1,g_2)\rabs 
\frac{t^{\epsilon} \biglpar \rho( z({Q_1})^{-1} h_1) \bigrpar^{\epsilon}}
{\biglpar t + \rho(h_1^{-1}g_1) + \rho(z({Q_1})^{-1}g_1)\bigrpar^{\homodim_1+2\epsilon}} \wrt{h}_1 \\
&\qquad\lesssim \int_{G_1} \labs a_{k,R}(h_1,g_2)\rabs 
\frac{t^{\epsilon} \len({Q_1})^{\epsilon}}
{\biglpar t + \rho(z({Q_1})^{-1}g_1)\bigrpar^{\homodim_1+2\epsilon}} \wrt{h}_1 \\
&\qquad\eqsim \frac{t^{\epsilon} \len({Q_1})^{\epsilon}}
{\biglpar t + \rho(z({Q_1})^{-1}g_1) \bigrpar^{\homodim_1+2\epsilon}} 
\norm{a_{k,R}(\cdot,g_2)}_{\Leb 1(G_1)},
\end{align*}
where $z({Q_1})$ is the center of ${Q_1}$.
Hence
\begin{align*}
\biglpar\sqfn{\psi^{[1]}}{\eta }(a_{k,R})(g_1,g_2) \bigrpar^2 
&=\int\!\!\!\dashint_{\Gamma_1^\eta (g_1)}
\biglabs a_{k,R} \ast_1 \psi_t^{[1]}(h_1,g_2) \bigrabs ^{2}
\wrt h_1  ,\frac{\wrt t}{t}\\
&\lesssim \int\!\!\!\dashint_{\Gamma_1^\eta (g_1)}
\frac{t^{2\epsilon} \len_1({Q_1})^{2\epsilon} \norm{a_{k,R}(\cdot,g_2)}_{\Leb 1(G_1)}^2}
{\biglpar t + \rho_1(z({Q_1})^{-1}h_1) \bigrpar^{2\homodim_1+4\epsilon}} 
\wrt h_1   \,\frac{\wrt t}{t}\\
&\eqsim \int\!\!\!\dashint_{\Gamma_1^\eta (g_1)}
\frac{t^{2\epsilon} \len_1({Q_1})^{2\epsilon} \norm{a_{k,R}(\cdot,g_2)}_{\Leb 1(G_1)}^2}
{\biglpar t + \rho_1(z({Q_1})^{-1}g_1) \bigrpar^{2\homodim_1+4\epsilon}} 
\wrt h_1  \,\frac{\wrt t}{t}\\
&= \frac{\len_1({Q_1})^{2\epsilon} \norm{a_{k,R}(\cdot,g_2)}_{\Leb 1(G_1)}^2 }
{ \biglpar \rho_1(z({Q_1})^{-1}g_1) \bigrpar^{2\homodim_1+2\epsilon} } 
 \int_{\Rplus}  \frac{t^{2\epsilon} }
{\biglpar t + 1 \bigrpar^{2\homodim_1+4\epsilon} } 
  \,\frac{\wrt t}{t}\\      
&\eqsim  \frac{\len({Q_1})^{2\epsilon}}{\biglpar\rho_1(z({Q_1})^{-1}g_1)\bigrpar^{2\homodim_1+2\epsilon}}  \norm{a_{k,R}(\cdot,g_2)}^2_{\Leb 1(G_1)}  \\
&\lesssim  \frac{\len({Q_1})^{2\epsilon} \labs {Q_1}\rabs}
{\biglpar\rho_1(z({Q_1})^{-1}g_1)\bigrpar^{2\homodim_1+2\epsilon}}   \norm{a_{k,R}(\cdot,g_2)}^2_{\Leb 2(G_1)} 
\end{align*}
(recall that $\dashint$ denotes the ``average integral'').
This estimate implies that
\begin{align*}
\term{I}_1(R)
&= \int_{(100\beta_k {\tilde{Q}_1})^c}\int_{100{Q_2}}\labs\sqfn{\psi}{\eta }(a_{k,R})(g_1,g_2)\rabs\wrt g_2 \wrt g_1  \\
&\leq \int_{(100\beta_k {\tilde{Q}_1})^c} |100{Q_2}|^{1/2} \lpar \int_{100{Q_2}}|\sqfn{\psi}{\eta }(a_{k,R})(g_1,g_2)|^2\wrt g_2
\rpar^{1/2} \wrt g_1\\
&\leq \int_{(100\beta_k {\tilde{Q}_1})^c} |100{Q_2}|^{1/2} \lpar \int_{G_2}|\sqfn{\psi}{\eta }(a_{k,R})(g_1,g_2)|^2\wrt g_2
\rpar^{1/2} \wrt g_1\\
&\lesssim \int_{(100\beta_k {\tilde{Q}_1})^c} |{Q_2}|^{1/2}
\frac{\len({Q_1})^{\epsilon} |{Q_1}|^{1/2} }{\rho_1(z({Q_1})^{-1}g_1)^{\homodim_1+\epsilon}} 
\norm{ a_{k,R}}_{\Leb2(\proG)} \wrt g_1\\
&=  \len({Q_1})^{\epsilon} 
\int_{(100\beta_k {\tilde{Q}_1})^c} \frac1{\rho_1(z({Q_1})^{-1}g_1)^{\homodim_1+\epsilon}} \wrt g_1 
\abs{R}^{1/2}  \norm{a_{k,R}}_{\Leb 2(\proG)} \\
&\eqsim  \beta_k ^{-\epsilon} \gamma_1(R)^{-\epsilon} \abs{R}^{1/2}  \norm{a_{k,R}}_{\Leb 2(\proG)}.
\end{align*}

To estimate $\term{I}_2(R)$, we argue as follows.
For $g_1\notin 100\beta_k {\tilde{Q}_1}$ and $g_2\notin 100 {Q_2}$, 
\begin{align*}
\biglpar\sqfn{\psi}{\eta }(a_{k,R})(g_1,g_2)\bigrpar^2
&=\int\!\!\!\dashint_{\Gamma^\eta (\prog)}  
\biglabs a_{k,R}\ast\psitt(\prog) \bigrabs ^2
\wrt\proh 
\wrtprotonprot ,
\end{align*}
and because of the cancellation of $a_{k,R}$ and properties of $\psi\one$ and $\psi\two$,
\begin{align*} 
&\biglabs a_{k,R}\ast\psitt(\prog) \bigrabs \\
&\qquad=  \bigglabs \int_{\proG} a_{k,R}(\proh) 
 \psitt(\proh^{-1}\prog) \wrt \proh \biggrabs \\
&\qquad=  \bigglabs \int_{\proG} a_{k,R}(h_1,h_2) 
 \psi\one_{t_1}(h_1^{-1}g_1)  \psi\two_{t_2}(h_2^{-1}g_2) \wrt h_2 \wrt h_1 \biggrabs \\ 
&\qquad=  \bigglabs \int_{\proG} a_{k,R}(h_1,h_2) 
\biglpar \psi\one_{t_1}(h_1^{-1}g_1) - \psi\one_{t_1}(z({Q_1})^{-1}g_1) \bigrpar\\
&\qquad\qquad\qquad\qquad \times\biglpar \psi\two_{t_2}(h_2^{-1}g_2) -\psi\two_{t_2}z({Q_2})^{-1}g_2)\bigrpar\wrt h_2 \wrt h_1 \biggrabs \\  
&\qquad\leq \norm{a_{k,R}}_{\Leb1(\proG)} 
\sup\lset \labs \psi\one_{t_1}(h_1^{-1}g_1) - \psi\one_{t_1}(z({Q_1})^{-1}g_1) \rabs \colon h_1 \in {Q_1}, g_1 \in (100b  {Q_1})^c \rset\\
&\qquad\qquad\qquad\qquad \times\sup\lset \labs \psi\two_{t_2}(h_2^{-1}g_2) - \psi\two_{t_2}(z({Q_2})^{-1}g_2) \rabs \colon h_2 \in {Q_2}, g_2 \in (100b  {Q_2})^c \rset\\
&\qquad\lesssim \norm{a_{k,R}}_{\Leb 1(\proG)} 
\frac{t_1^{\epsilon} \len({Q_1})^{\epsilon}}{\biglpar t_1 + \rho_1(g_1^{-1}z({Q_1}))\bigrpar^{\homodim_1+2\epsilon}}
\frac{t_2^{\epsilon}\len({Q_2})^{\epsilon}}{\biglpar t_2 + \rho_2(g_2^{-1}z({Q_2}))\bigrpar^{\homodim_2+2\epsilon}} \,,
\end{align*}
by the same geometrical arguments as used to treat $\term{I}_1(R)$, applied in both variables.
Hence $\biglpar\sqfn{\psi}{\eta }(a_{k,R})(\prog)\bigrpar^2$ is dominated by a multiple of $\norm{a_{k,R}}_{\Leb 1(\proG)} $ by
\begin{align*}
& \int\!\!\!\dashint_{\Gamma^\eta (\prog)}  
\frac{t_1^{2\epsilon} \len({Q_1})^{2\epsilon}}{\biglpar t_1 + \rho_1(h_1^{-1}z({Q_1}))\bigrpar^{2\homodim_1+2\epsilon}}
\frac{t_2^{2\epsilon} \len({Q_2})^{2\epsilon}}{\biglpar t_2 + \rho_2(h_2^{-1}z({Q_2}))\bigrpar^{2\homodim_2+2\epsilon}} 
\wrt\proh 
\wrtprotonprot \\
&\eqsim \int_{\proT}  
\frac{t_1^{2\epsilon} \len({Q_1})^{2\epsilon}}{\biglpar t_1 + \rho_1(g_1^{-1}z({Q_1}))\bigrpar^{2\homodim_1+4\epsilon}}
\frac{t_2^{2\epsilon} \len({Q_2})^{2\epsilon}}{\biglpar t_2 + \rho_2(g_2^{-1}z({Q_2}))\bigrpar^{2\homodim_2+4\epsilon}} 
\wrtprotonprot \\
&\eqsim   
\frac{ \len({Q_1})^{2\epsilon}}{\biglpar \rho_1(g_1^{-1}z({Q_1}))\bigrpar^{2\homodim_1+2\epsilon}}
\frac{ \len({Q_2})^{2\epsilon}}{\biglpar \rho_2(g_2^{-1}z({Q_2}))\bigrpar^{2\homodim_2+2\epsilon}} 
\,.
\end{align*}
Our estimation of $\term{I}_2(R)$ concludes with the observation that
\begin{align*}
\term{I}_2(R)
&= \int_{(100\beta_k {\tilde{Q}_1})^c}\int_{(100{Q_2})^c}| \sqfn{\psi}{\eta }(a_{k,R})(g_1,g_2)|\wrt g_2 \wrt g_1  \\
&\lesssim \norm{a_{k,R}}_{\Leb 1(\proG)} \int_{(100\beta_k {\tilde{Q}_1})^c}\int_{(100{Q_2})^c} 
\frac{ \len({Q_1})^{\epsilon}}{\biglpar \rho_1(g_1^{-1}z({Q_1}))\bigrpar^{\homodim_1+\epsilon}}
\frac{ \len({Q_2})^{\epsilon}}{\biglpar \rho_2(g_2^{-1}z({Q_2}))\bigrpar^{\homodim_2+\epsilon}}  
\wrt g_2 \wrt g_1 \\
&\lesssim \frac{ \len({Q_1})^{\epsilon}}{(100 \beta_k  \len({\tilde{Q}_1}))^{\epsilon}}
\frac{ \len({Q_2})^{\epsilon}}{(100  \len({Q_2}))^{\epsilon}} 
\abs{R}^{1/2} \norm{a_{k,R}}_{\Leb 2(\proG)}  \\
&\eqsim  \beta_k ^{-\epsilon} \gamma_1(R)^{-\epsilon} \abs{R}^{1/2} \norm{a_{k,R}}_{\Leb 2(\proG)}.
\end{align*}

Then, by combining the estimates of $\term{I}_1(R)$ and $\term{I}_2(R)$, and then using Hölder's inequality and Lemma \ref{theorem-cover lemma}, we see that for all $k \geq 1$,
\begin{align*}
&\sum_{R\in \maxrect(U^*_k)} \term{I}_1(R) + \term{I}_2(R)  \\
&\lesssim \beta_k ^{-\epsilon} \sum_{R\in  \maxrect(U^*_k)} \gamma(R)^{-\epsilon} \abs{R}^{1/2} \norm{a_{k,R}}_{\Leb 2(\proG)}\\
&\leq  \beta_k ^{-\epsilon}\bigglpar\sum_{R\in \maxrect(U^*_k)} \gamma_1(R)^{-2\epsilon}\abs{R}
\biggrpar^{1/2}\bigglpar\sum_{R\in \maxrect(U^*_k)}  \norm{a_{k,R}}^2_{\Leb 2(\proG)}\biggrpar^{1/2}\\
&\lesssim  \beta_k ^{-\epsilon} |U_k|^{1/2} \Biglpar 2^{2k} F_\Phi(2^{-k} f) \Bigrpar^{1/2}\\
&\lesssim  \beta_k ^{-\epsilon} 2^{k} F_\Phi(2^{-k} f)\\
&\lesssim  2^{-\epsilon k/(2\homodim_1+2\homodim_2)} F_\Phi(f) .
\end{align*}

Similar estimates hold for $\term{I}_3(R)$ and $\term{I}_4(R)$, but with $\gamma_2$ in place of $\gamma_1$, and hence \eqref{e 42} holds, and the proof is complete.

\subsection{Proof of Theorem \ref{thm main} for square functions $\sqfn{\psi}{0}(f)$}\label{Sec4.3b}

Arguing much as in Section \ref{Sec4.3}, we may also check that
\begin{equation}
\big|\big\{ \prog\in \proG\colon \sqfn{\psi}{\eta }(f)(\prog)>\lambda   \big\}\big|
\lesssim F_\Phi(f/\lambda)
\qquad\forall\lambda\in\Rplus
\end{equation}
for all $f\in \LlogplusL (\proG)$. 
We leave the details to the reader.

\subsection{Hyperweak boundedness of Riesz transformations}

We write $\Riesz$ for a double Riesz transformation $\Riesz\one_{j_1} \otimes \Riesz\two_{j_2}$.
\begin{theorem}\label{thm Riesz}
The double Riesz transformations satisfy the endpoint estimate:
\begin{equation}\label{eq Riesz}
\big|\big\{ \prog\in \proG\colon \Riesz(f)(\prog)>\lambda   \big\}
\lesssim  F_\Phi(f/\lambda)
\qquad\forall\lambda\in\Rplus
\end{equation}
for all $f\in \LlogplusL (\proG)$. 
\end{theorem}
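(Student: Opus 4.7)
The plan is to mirror the argument of Section \ref{Sec4.3}, replacing the Lusin-area kernel $\psitt$ by the convolution kernel $K\one_{j_1} \otimes K\two_{j_2}$ of $\Riesz$. By Proposition \ref{prop density} and sublinearity, it suffices to prove \eqref{eq Riesz} with $\lambda = 1$ for $f \in \Leb 2(\proG) \cap \LlogplusL(\proG)$. I would decompose $f = \sum_{k \in \Z} a_k$ via Theorem \ref{lemma L log L atom} and split at $k = 0$. The low-frequency piece $\sum_{k \leq 0} a_k$ is handled via the $\Leb 2(\proG)$-boundedness of $\Riesz$ (which follows by tensor product from the $\Leb 2(G_i)$-boundedness of each $\Riesz\one_{j_1}$, $\Riesz\two_{j_2}$), the atom estimate $\|a_k\|_{\Leb 2(\proG)}^2 \lesssim 2^{2k} F_\Phi(2^{-k} f)$, and Chebychev's inequality, exactly as in \eqref{eq e41}.

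For the high-frequency piece, form the same enlarged exceptional set $E^\dagger = \bigcup_{k \geq 1}\bigcup_{R \in \maxrect(U^*_k)} R^\dagger$, with the dilations $\beta_k = 2^{k/(2\homodim_1+2\homodim_2)}$, as in Section \ref{Sec4.3}. Since $E^\dagger$ depends only on the atomic decomposition and not on the operator being estimated, the bound $|E^\dagger| \lesssim F_\Phi(f)$ from \eqref{eq:Rdagger-est} carries over verbatim. Everything then reduces to establishing
\[
\sum_{R \in \maxrect(U^*_k)} \int_{(R^\dagger)^c} |\Riesz(a_{k,R})(\prog)| \wrt\prog \lesssim 2^{-\delta k} F_\Phi(f)
\qquad \forall k \geq 1
\]
for some $\delta \in \Rplus$; summation in $k$ and one more application of Chebychev then finish the argument.

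To prove this off-diagonal inequality, I would split $(R^\dagger)^c$ into the same four regions $\term{I}_1(R), \dots, \term{I}_4(R)$ used in Section \ref{Sec4.3}. Each one-parameter Riesz kernel $K^{[i]}_{j_i}$ satisfies the standard Calderón--Zygmund size bound and Hölder smoothness, so for $g_i$ well separated from $Q_i$,
\[
\biglabs K^{[i]}_{j_i}(h_i^{-1} g_i) - K^{[i]}_{j_i}(z(Q_i)^{-1} g_i) \bigrabs
\lesssim \frac{\len(Q_i)^\epsilon}{\rho_i(z(Q_i)^{-1} g_i)^{\homodim_i + \epsilon}}
\qquad \forall h_i \in Q_i.
\]
Using the double cancellation of $a_{k,R}$: in the doubly-far region $\term{I}_2(R)$, subtract both reference values and apply both smoothness inequalities; in a mixed region such as $\term{I}_1(R)$ (where $g_1$ is far from $Q_1$ but $g_2 \in 100 Q_2$), subtract only in the first variable and then invoke the $\Leb 2(G_2)$-boundedness of $\Riesz\two_{j_2}$ in the second variable, playing the role occupied by $\sqfn{\psi^{[2]}}{\eta}$ in Section \ref{Sec4.3}. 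In each case one recovers a bound of the form $\beta_k^{-\epsilon} \gamma_i(R)^{-\epsilon} |R|^{1/2} \|a_{k,R}\|_{\Leb 2(\proG)}$, which combined with Hölder's inequality, Lemma \ref{theorem-cover lemma}, and $\sum_{R} \|a_{k,R}\|^2_{\Leb 2(\proG)} \lesssim 2^{2k} F_\Phi(2^{-k} f)$ yields the required geometric decay in $k$.

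The main obstacle is precisely the treatment of the mixed regions $\term{I}_1(R)$ and $\term{I}_3(R)$: unlike the area-function kernel $\psitt$, the Riesz kernels carry no intrinsic scale parameter $t$, so one cannot invoke a genuine square-function estimate in the orthogonal variable. Instead one must pair the pointwise Hölder smoothness of $K^{[1]}_{j_1}$ (giving $\len(Q_1)^\epsilon \gamma_1(R)^{-\epsilon}\beta_k^{-\epsilon}$ off the support of $a_{k,R}$) with the operator-theoretic $\Leb 2$-boundedness of $\Riesz\two_{j_2}$ acting on the partial slice $h_2 \mapsto a_{k,R}(\cdot,h_2)$. This is a classical product Calderón--Zygmund manoeuvre that presents only a technical, not a conceptual, obstruction, so the remainder of the proof runs in parallel with Section \ref{Sec4.3}.
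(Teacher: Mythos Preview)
Your proposal is correct and follows essentially the same route as the paper's own proof: reduction to $\lambda=1$ on $\Leb 2\cap\LlogplusL$, atomic decomposition, $\Leb 2$-boundedness for $k\le 0$, the same exceptional set $E^\dagger$ with the same $\beta_k$, and the off-diagonal estimate proved by combining kernel smoothness in the far variable with $\Leb 2(G_i)$-boundedness of the one-parameter Riesz transform in the near variable. The only cosmetic difference is that the paper organises the region decomposition as two terms $\term{I}_1,\term{I}_2$ (each split further into $\term{I}_{11},\term{I}_{12}$) rather than the four terms $\term{I}_1(R),\dots,\term{I}_4(R)$ of Section~\ref{Sec4.3}, but the pieces match one-to-one.
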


\begin{proof}
By density and sublinearity, we need only prove that, for all $f\in \Leb 2(\proG) \cap \LlogplusL (\proG)$,
\begin{equation}\label{eq e52 homo}
\big|\big\{ \prog\in \proG\colon \Riesz(f)(\prog)>1  \big\}\big|
\lesssim  F_\Phi(f).
\end{equation}

From Theorem \ref{lemma L log L atom}, we may write
\[
f=\sum_ka_k
\]
where the $a_k$ satisfy the conditions specified there. 
By the $\Leb{2}(\proG)$ boundedness of $\Riesz$,
\begin{equation}\label{eq e53}
\Bigl|\Bigl\{\prog\in \proG\colon \Riesz\bigglpar\sum_{k \leq 0} a_k\biggrpar(\prog)>1 \Bigr\}\Bigr|
\lesssim \Bigl\|\sum_{k \leq 1}a_k\Bigr\|_{\Leb 2(\proG)}^2
\lesssim F_\Phi(f).
\end{equation}

To handle $\Riesz(\sum_{k \geq 1} a_k)$, we consider the $a_k$ in more detail.
The atom $a_k$ is supported in $U^\dagger_k$, where $|U^\dagger_k|\lesssim F_\Phi(2^{-k} f)$, and we may write $a_k=\sum_{R\in \maxrect(U^*_k)} a_{k,R}$.

Again, for all $R = {Q_1} \times {Q_2} \in \maxrect(U^*_k)$, let ${\tilde{Q}_1}$ be the biggest pseudo\-dyadic cube containing  ${Q_1}$ such that ${\tilde{Q}_1}\times
{Q_2}\subset U^{**}_k$, where $U^{**}_k$ is defined in \eqref{eq:def-Ukstarstar}. 
Next, let ${\tilde{Q}_2}$ be the biggest pseudo\-dyadic cube containing ${Q_2}$ such that ${\tilde{Q}_1}\times {\tilde{Q}_2} \subseteq U^{***}_k$, where $U^{***}_k$ is defined in \eqref{eq:def-Ukstarstarstar}. 
Now let $R^{\dagger}$ be $100\beta_k({\tilde{Q}_1} \times {\tilde{Q}_2})$, where $\beta_k = 2^{k/
(2\homodim_1+2\homodim_2)}$.
By Lemmas \ref{lem:enlargement} and \ref{lem:compare-R-P} and Theorem \ref{lemma L log L atom},
\begin{align*}
\bigglabs \bigcup_{R \in \maxrect(U^*_k)} R^{\dagger}\biggrabs
&\lesssim 2^{k/2} |U_k^{***}|
 \lesssim 2^{k/2} |U_k^{**}|
 \lesssim 2^{k/2} |U_k^*|
 \lesssim 2^{k/2} |U_k|
 \lesssim 2^{k/2} F_\Phi(2^{-k} f).
\end{align*}

As in the proof of Theorem \ref{thm main}, to prove that
\begin{equation}\label{e 54}
\bigg|\bigg\{\prog\in \proG\colon \Riesz\bigglpar\sum_{k \geq 1} a_k\biggrpar(\prog)>1\bigg\}\bigg|
\lesssim  F_\Phi(f),
\end{equation}
it suffices to show that, for some $\delta \in \Rplus$,
\begin{equation}\label{e 2}
\sum_{R\in \maxrect(U^*_k)}
\int_{(R^{\dagger})^c} |\Riesz(a_{k,R})(\prog)|\wrt \prog
\lesssim 2^{-\delta k} F_\Phi(f)
\end{equation}
for every $k \geq 1$.
If \eqref{e 2} holds, then \eqref{eq e53} and \eqref{e 54} imply \eqref{eq e52 homo}.
We now prove \eqref{e 2}.
\begin{align*}
\int_{(R^{\dagger})^c} |\Riesz(a_{k,R})(\prog)|\wrt \prog 
&\le \iint_{g_1\notin 100\beta_k{\tilde{Q}_1}} |\Riesz(a_{k,R})(\prog)|\wrt \prog +  \iint_{g_2\notin100\beta_k{\tilde{Q}_2}} |\Riesz(a_{k,R})(\prog)|\wrt \prog \\
&=:\term{I}_1 + \term{I}_2,
\end{align*}
say, where $\beta_k=2^{{k/(2\homodim_1+2\homodim_2)}}$.
Since the estimates of $\term{I}_1$ and $\term{I}_2$ are symmetric, we only estimate $\term{I}_1$.
Note that 
\begin{align*}
\term{I}_1&=\int_{g_1\notin 100\beta_k{\tilde{Q}_1}}\int_{100{Q_2}}|\Riesz(a_{k,R})(\prog)|\wrt \prog+\int_{g_1\notin 100\beta_k{\tilde{Q}_1}}\int_{(100{Q_2})^c}|\Riesz(a_{k,R})(\prog)|\wrt \prog \\
&=:\term{I}_{11}+\term{I}_{12},
\end{align*}
say.
By Hölder's inequality and the $\Leb 2(G_2)$-boundedness of  $\Riesz\two_{j_2}$,
\begin{align*}
\term{I}_{11}
&\lesssim |{\tilde{Q}_2}|^{1/2}\int_{g_1\notin 100\beta_k{\tilde{Q}_1}}
    \bigglpar\int_{Q_2} \biglpar \Riesz\one_{j_1}(a_{k,R})(g_1,g_2)\bigrpar ^2 \wrt g_2\biggrpar^{1/2} \wrt g_1.
\end{align*}
By the cancellation condition on $a_{k,R}(\cdot,g_2)$ and the smoothness of $\Riesz\one_{j_1}$,
\begin{align*}
\labs \Riesz\one_{j_1}(a_{k,R})(\prog) \rabs
&= \labs \int_{{G_1}} \biglpar \Riesz\one_{j_1}(g_1,h_1)-\Riesz\one_{j_1}(g_1,z({Q_1}))\bigrpar a_{k,R}(h_1,g_2) \wrt h_1 \rabs \\
&\lesssim \frac{\len({Q_1})^{\epsilon}}{\rho_1(g_1,z({Q_1}))^{\homodim_1+\epsilon}}
    \int_{{G_1}} a_{k,R}(h_1,g_2) \wrt h_1\\
&\lesssim \frac{\len({Q_1})^{\epsilon}}{\rho_1(g_1,z({Q_1}))^{\homodim_1+\epsilon}}
   |{Q_1}|^{1/2}\bigglpar\int_{{G_1}} |a_{k,R}(h_1,g_2)|^2 \wrt h_1\biggrpar^{1/2},
\end{align*}
where $z({Q_1})$ is the center of ${Q_1}$. 
Hence
\begin{align*}
\term{I}_{11}
\lesssim \beta_k^{-\epsilon}\gamma_1(R)^{-\epsilon} \abs{R}^{1/2} \norm{a_{k,R}}_{\Leb 2(\proG)}.
\end{align*}

To estimate $\term{I}_{12}$, we use the cancellation of $a_{k,R}$ to write $\Riesz(a_{k,R})(\prog)$ as
\begin{align*}
&\int_{\beta_k R} \biglpar  \Riesz\one_{j_1}(g_1,h_1) - \Riesz\one_{j_1}(g_1, z({Q_1}))\bigrpar 
                         \biglpar  \Riesz\two_{j_2}(g_2,h_2)-\Riesz\two_{j_2}(g_2, z({Q_2}))\bigrpar a_{k,R}(h_1,h_2) \wrt h_1 \wrt h_2,
\end{align*}
where $z({Q_2})$ is the center of ${Q_2}$. 
Using the smoothness of $\Riesz\one_{j_1}$ and $\Riesz\two_{j_2}$, we deduce that
\begin{align*}
\term{I}_{12}&\lesssim \int_{g_1\notin 100b{\tilde{Q}_1}}\int_{(100{Q_2})^c} \frac{\len({Q_1})^{\epsilon}}{\rho_1(g_1,z({Q_1}))^{\homodim_1+\epsilon}}
\frac{\len({Q_2})^{\epsilon}}{\rho_2(g_2,z({Q_2}))^{\homodim_2+\epsilon}}
\wrt g_1 \wrt g_2 \labs R\rabs^{1/2} \norm{a_{k,R}}_{\Leb 2(\proG)}\\
&\lesssim  b^{-\epsilon}\gamma_1(R)^{-\epsilon} \abs{R}^{1/2} \norm{a_{k,R}}_{\Leb 2(\proG)}.
\end{align*}
Then, by continuing as in Section \ref{Sec4.3}, we establish \eqref{e 2} and complete the proof.                    %
\end{proof}

\end{document}